\titleformat{\section}{\large\bfseries}{\thesection}{1em}{}
\numberwithin{equation}{section}
\theoremstyle{definition}
\newtheorem{definition}{Definition}[section]
\theoremstyle{plain}
\newtheorem{thm}{Theorem}
\theoremstyle{plain}
\newtheorem{prop}{Proposition}[section]
\theoremstyle{plain}
\newtheorem{lem}[prop]{Lemma}
\theoremstyle{plain}
\newtheorem{cor}[thm]{Corollary}
\theoremstyle{definition}
\newtheorem{rmk}{Remark}
\newcommand\numberthis{\addtocounter{equation}{1}\tag{\theequation}}
\begin{document}

\begin{center}
\large{\textbf{On the zeros of the $k$-th derivative of the Riemann zeta function under the Riemann hypothesis}}
\end{center}

\vspace{2mm}
\begin{center}
\large{Ade Irma Suriajaya}
\let\thefootnote\relax\footnote{2010 \emph{Mathematics Subject Classification}: Primary 11M06.}
\let\thefootnote\relax\footnote{\emph{Keywords and phrases}: Riemann zeta function, derivative, zeros.}
\let\thefootnote\relax\footnote{This work was partly supported by Nitori International Scholarship Foundation and the Iwatani Naoji Foundation.}
\end{center}

\begin{center}
Graduate School of Mathematics, Nagoya University,\\
Furo-cho, Chikusa-ku, Nagoya, 464-8602, Japan

\vspace{1mm}
\small{\textit{m12026a@math.nagoya-u.ac.jp}}
\end{center}

\vspace{1mm}
\begin{abstract}
The number of zeros and the distribution of the real part of non-real zeros of the derivatives of the Riemann zeta function have been investigated by Berndt, Levinson, Montgomery, and Akatsuka. Berndt, Levinson, and Montgomery studied the general case, meanwhile Akatsuka gave sharper estimates for the first derivative of the Riemann zeta function under the truth of the Riemann hypothesis. In this paper, we generalize the results of Akatsuka to the $k$-th derivative (for positive integer $k$) of the Riemann zeta function.
\end{abstract}

\vspace{1mm}
\section{Introduction}
\label{sec:1}

The theory of the Riemann zeta function $\zeta(s)$ has been studied for over 150 years. Among the topics of research, the study of its zeros has been one of the main subject of interest, in particular, the study of the zeros of its derivatives has also been part of the research area.
In fact, in 1970, Berndt \cite[Theorem]{ber} proved that
\begin{equation} \label{eq:nkt}
N_k(T) = \frac{T}{2\pi}\log{\frac{T}{4\pi}}-\frac{T}{2\pi}+O(\log{T})
\end{equation}
where $N_k(T)$ denotes the number of zeros of the $k$-th derivative of the Riemann zeta function $\zeta^{(k)}(s)$, with $0<\text{Im}\,(s)\leq T$, counted with multiplicity, for any positive integer $k$.
And in 1974, Levinson and Montgomery \cite[Theorem 10]{lev} showed that for any positive integer $k$,
\begin{equation} \label{eq:kdist}
\begin{aligned}
\sum_{\substack{\rho^{(k)}=\beta^{(k)}+i\gamma^{(k)},\\ \zeta^{(k)}(\rho^{(k)})=0,\, 0<\gamma^{(k)}\leq T}} \left(\beta^{(k)}-\frac{1}{2}\right) = \frac{kT}{2\pi}\log{\log{\frac{T}{2\pi}}} &+ \frac{1}{2\pi}\left(\frac{1}{2}\log{2} - k\log{\log{2}}\right)T \\
&- k\operatorname{Li}\left(\frac{T}{2\pi}\right) + O(\log{T})
\end{aligned}
\end{equation}
where the sum is counted with multiplicity and
\begin{align*}
\operatorname{Li}(x) := \int_2^x \frac{dt}{\log{t}}.
\end{align*}
In addition to the above result \eqref{eq:kdist}, Levinson and Montgomery \cite{lev} also studied the location of the zeros of $\zeta^{(k)}(s)$.
There are many other papers on the zeros of $\zeta^{(k)}(s)$; for example, Conrey and Ghosh \cite[Theorem 1]{con} in 1989, studied the zeros of $\zeta^{(k)}(s)$ near the critical line.

In 2012, Akatsuka \cite[Theorems 1 and 3]{aka} improved each of the error term of the results obtained by Berndt and by Levinson and Montgomery mentioned above (see \eqref{eq:nkt} and \eqref{eq:kdist}) for the case $k=1$ under the assumption of the truth of the Riemann hypothesis.
More precisely, he showed that
\begin{align*}
\sum_{\substack{\rho'=\beta'+i\gamma',\\ \zeta'(\rho')=0,\, 0<\gamma'\leq T}} \left(\beta'-\frac{1}{2}\right)
= \frac{T}{2\pi}\log{\log{\frac{T}{2\pi}}} &+ \frac{1}{2\pi}\left(\frac{1}{2}\log{2} - \log{\log{2}}\right)T \\
&- \operatorname{Li}\left(\frac{T}{2\pi}\right) + O((\log{\log{T})^2})
\end{align*}
and
\begin{equation*}
N_1(T) = \frac{T}{2\pi}\log{\frac{T}{4\pi}}-\frac{T}{2\pi}+O\left(\frac{\log{T}}{(\log{\log{T}})^{1/2}}\right)
\end{equation*}
if the Riemann hypothesis is true.
In this paper, we generalize these two results of Akatsuka for any positive integer $k$.
Before we introduce our results, we define some notation.

\vspace{3mm}
We denote by $\mathbb{Z}$, $\mathbb{R}$, and $\mathbb{C}$
the set of all rational integers, the set of all real numbers, and the set of all complex numbers, respectively.
Throughout this paper, the letter $k$ is used as a fixed positive integer, unless otherwise specified.
Next, let $\rho = \beta + i\gamma$ and $\rho^{(k)} = \beta^{(k)} + i\gamma^{(k)}$ be the nontrivial zeros of the Riemann zeta function and the non-real zeros of the $k$-th derivative of the Riemann zeta function, respectively. Then we define $N(T)$ and $N_k(T)$ as follows:

\begin{definition}
For $T>0$, we define
\begin{equation*}
N(T) := \sharp'\{\rho=\beta+i\gamma \,|\, 0<\gamma\leq T\}
\end{equation*}
and
\begin{equation*}
N_k(T) := \sharp'\{\rho^{(k)}=\beta^{(k)}+i\gamma^{(k)} \,|\, 0<\gamma^{(k)}\leq T\}
\end{equation*}
where $\sharp'$ means the number of elements counted with multiplicity.
\end{definition}

\vspace{2mm}
The following results generalize of Theorem 1, Corollary 2, and Theorem 3 of \cite{aka}, respectively. Note that each sum counts the non-real zeros of $\zeta^{(k)}(s)$ with multiplicity and that the implicit constant in $O_k(\cdot)$ depends only on $k$.

\begin{thm} \label{cha1}
Assume that the Riemann hypothesis is true. Then for any $T>2\pi$, we have
\begin{equation*}
\begin{aligned}
\sum_{\substack{\rho^{(k)}=\beta^{(k)}+i\gamma^{(k)},\\ 0<\gamma^{(k)}\leq T}} \left(\beta^{(k)} -\frac{1}{2}\right) = \frac{kT}{2\pi}\log{\log{\frac{T}{2\pi}}} &+ \frac{1}{2\pi}\left(\frac{1}{2}\log{2} - k\log{\log{2}}\right)T \\
&- k\operatorname{Li}\left(\frac{T}{2\pi}\right) + O_k((\log{\log{T}})^2).
\end{aligned}
\end{equation*}
\end{thm}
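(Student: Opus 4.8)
We sketch the proof. Write $s=\sigma+it$. Since $\zeta^{(k)}(s)=(-1)^k\sum_{n\ge2}(\log n)^k n^{-s}$ for $\sigma>1$, the normalisation
\[
G_k(s):=\frac{(-1)^k\,2^s}{(\log 2)^k}\,\zeta^{(k)}(s)=1+\sum_{n\ge3}\frac{(\log n)^k}{(\log 2)^k}\Bigl(\frac2n\Bigr)^{s}
\]
satisfies $G_k(s)\to1$ as $\sigma\to+\infty$, has exactly the zeros of $\zeta^{(k)}$, and a pole of order $k+1$ at $s=1$. First I would collect the zero-localisation input under RH: by Speiser's theorem and its higher-derivative extensions, together with the results of Levinson--Montgomery \cite{lev} and Conrey--Ghosh \cite{con}, the non-real zeros $\rho^{(k)}=\beta^{(k)}+i\gamma^{(k)}$ with $\gamma^{(k)}>0$ satisfy $\beta^{(k)}\ge\tfrac12$ with at most $O_k(1)$ exceptions, and $\beta^{(k)}-\tfrac12\ll_k(\log\log\gamma^{(k)})/\log\gamma^{(k)}$ once $\gamma^{(k)}$ is large; there are only finitely many non-real zeros with $0<\gamma^{(k)}\le1$. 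All such exceptional zeros contribute $O_k(1)$ to the sum, so it suffices to evaluate $\Sigma(T):=\sum_{\beta^{(k)}>1/2,\ 1<\gamma^{(k)}\le T}(\beta^{(k)}-\tfrac12)$.

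Applying Littlewood's lemma to $G_k$ on the rectangle $\{\tfrac12\le\sigma\le\sigma_2,\ 1\le t\le T\}$ and letting $\sigma_2\to\infty$ — the right-edge integral tends to $0$ since $G_k\to1$ uniformly — gives
\[
2\pi\,\Sigma(T)=\int_1^T\log\bigl|G_k(\tfrac12+it)\bigr|\,dt+\int_{1/2}^{\infty}\arg G_k(\sigma+iT)\,d\sigma+O_k(1),
\]
with $\arg$ defined by continuous variation from $\sigma=+\infty$; zeros on the edges $\sigma=\tfrac12$ or $t=T$ are harmless since they contribute $0$ to $\Sigma(T)$. Now $\log|G_k(\tfrac12+it)|=\tfrac12\log2-k\log\log2+\log|\zeta(\tfrac12+it)|+\log\bigl|\tfrac{\zeta^{(k)}}{\zeta}(\tfrac12+it)\bigr|$. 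Applying Littlewood's lemma also to $\zeta$ on the same rectangle and using RH (all $\beta=\tfrac12$, so the zero-sum vanishes) yields $\int_1^T\log|\zeta(\tfrac12+it)|\,dt=-\int_{1/2}^{\infty}\arg\zeta(\sigma+iT)\,d\sigma+O(1)$, which cancels the $\zeta$-component of the height-$T$ argument integral. After this cancellation,
\[
2\pi\,\Sigma(T)=\Bigl(\tfrac12\log2-k\log\log2\Bigr)T+\int_1^T\log\Bigl|\tfrac{\zeta^{(k)}}{\zeta}(\tfrac12+it)\Bigr|\,dt+\int_{1/2}^{\infty}\arg\mathcal H_k(\sigma+iT)\,d\sigma+O_k(1),
\]
where $\mathcal H_k:=G_k/\zeta\to1$ as $\sigma\to+\infty$.

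The heart of the matter is to show, under RH,
\[
\int_1^T\log\Bigl|\tfrac{\zeta^{(k)}}{\zeta}(\tfrac12+it)\Bigr|\,dt=kT\log\log\tfrac{T}{2\pi}-2\pi k\,\mathrm{Li}\!\Bigl(\tfrac{T}{2\pi}\Bigr)+O_k\bigl((\log\log T)^2\bigr).
\]
For $k=1$ this is, in substance, Theorem~1 of \cite{aka}: on the critical line the mean logarithmic size of $|\zeta'/\zeta(\tfrac12+it)|$ is $\log\log\tfrac{t}{2\pi}$, and the error is controlled by RH bounds on $S(t)$ and on the second moment of $\log|\zeta'/\zeta|$. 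For general $k$ I would telescope $\zeta^{(k)}/\zeta=\prod_{j=1}^{k}\zeta^{(j)}/\zeta^{(j-1)}$, so that $\log|\zeta^{(k)}/\zeta|=\sum_{j=1}^{k}\log|\zeta^{(j)}/\zeta^{(j-1)}|$, and show that each factor $\zeta^{(j)}/\zeta^{(j-1)}=\tfrac{d}{ds}\log\zeta^{(j-1)}$ behaves on $\sigma=\tfrac12$ like $\zeta'/\zeta$: its poles are the zeros of $\zeta^{(j-1)}$, which by the localisation step lie at distance $\ll(\log\log t)/\log t$ to the right of $\sigma=\tfrac12$ with the same leading density $\tfrac1{2\pi}\log\tfrac{t}{2\pi}$ as the zeros of $\zeta$, so the same first- and second-moment estimates apply up to an admissible error. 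Then $\int_1^T k\log\log\tfrac{t}{2\pi}\,dt=kT\log\log\tfrac{T}{2\pi}-2\pi k\,\mathrm{Li}(\tfrac{T}{2\pi})+O_k(1)$, using $\int\log\log u\,du=u\log\log u-\mathrm{Li}(u)+\mathrm{const}$, which produces exactly the stated main term and the $\mathrm{Li}$-term.

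Finally one must bound $\int_{1/2}^{\infty}\arg\mathcal H_k(\sigma+iT)\,d\sigma=O_k((\log\log T)^2)$. Beyond a $k$-dependent abscissa, $|\mathcal H_k-1|<1$ and the tail is $O_k(1)$; on the bounded remaining range a Backlund-type count bounds $\arg\mathcal H_k(\sigma+iT)$ by the number of sign changes of $\mathrm{Re}\,\mathcal H_k(\cdot+iT)$, hence by $\log\max|\mathcal H_k|$ on a nearby disc, the point being that the poles of $\mathcal H_k$ on $\sigma=\tfrac12$ (zeros of $\zeta$) and the adjacent zeros of $\mathcal H_k$ just to their right (zeros of $\zeta^{(k)}$) nearly cancel, so that only the genuinely unmatched part is governed by the RH bound $\log(1/|\zeta(\tfrac12+it)|)\ll\log T/\log\log T$, which keeps the integral at $O_k((\log\log T)^2)$. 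Collecting the pieces and dividing by $2\pi$ gives the theorem. I expect the main obstacle to be the quantitative Levinson--Montgomery step for general $k$: proving that the lower-order terms in the Bell-polynomial expansion of $\zeta^{(k)}/\zeta$ — equivalently, the deviation of each $\zeta^{(j)}/\zeta^{(j-1)}$ from $\zeta'/\zeta$ — neither perturb the main term nor exceed $O_k((\log\log T)^2)$ under RH. This is where the passage from $k=1$ to general $k$ requires genuinely new work, and it is entangled with the zero-localisation estimates for the zeros of $\zeta^{(j)}$, $j<k$, that feed into it.
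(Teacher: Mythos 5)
Your overall skeleton (Littlewood's lemma applied to $G_k$, cancellation of the $\zeta$-contributions, main terms plus a height-$T$ argument integral) parallels the paper, but the proposal has a genuine gap at its central step. You reduce everything to the claim
\begin{equation*}
\int_1^T\log\Bigl|\tfrac{\zeta^{(k)}}{\zeta}\bigl(\tfrac12+it\bigr)\Bigr|\,dt
= kT\log\log\tfrac{T}{2\pi}-2\pi k\,\mathrm{Li}\bigl(\tfrac{T}{2\pi}\bigr)+O_k\bigl((\log\log T)^2\bigr),
\end{equation*}
which, given the rest of your argument, is essentially equivalent to the theorem itself, and you do not prove it. For $k=1$ it is not ``in substance'' Akatsuka's Theorem 1: that theorem is the $k=1$ case of the zero-sum asymptotic, not a mean-value statement on the critical line, and extracting the latter from the former would require exactly the missing analysis. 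For general $k$ your telescoping rests on the localisation claim that all large non-real zeros of $\zeta^{(j)}$ satisfy $\beta^{(j)}-\tfrac12\ll(\log\log\gamma^{(j)})/\log\gamma^{(j)}$; this is not a known consequence of RH and is in fact false — $\zeta^{(j)}$ has order $T$ zeros up to height $T$ with real part bounded away from $\tfrac12$ (the zero-free region only begins at $\sigma\geq\tfrac74 j+2$), and only the averaged statement of Levinson--Montgomery is available. The paper never touches the mean of $\log|\zeta^{(k)}/\zeta|$ on the line: it runs Littlewood's lemma with left edge at $b=\tfrac12-\delta$, uses the functional equation to write $\zeta^{(k)}(s)=F^{(k)}(s)\zeta(1-s)\bigl(1-\sum_{j=1}^k\binom{k}{j}(-1)^{j-1}\frac{F^{(k-j)}}{F^{(k)}}(s)\frac{\zeta^{(j)}}{\zeta}(1-s)\bigr)$, gets the main terms $kT\log\log\frac{T}{2\pi}-2\pi k\,\mathrm{Li}(\frac{T}{2\pi})$ from $\int\log|\frac{F^{(k)}}{F}(b+it)|\,dt$ via Stirling, and kills the bracket by shifting its logarithm far to the left (where it is $\leq 2^{\sigma}$) and using uniform argument bounds for $\frac{F^{(k)}}{F}$ and $\frac{\zeta^{(k)}}{\zeta}$ in $\sigma_k\leq\sigma<\tfrac12$ (Lemma \ref{lem1}, conditions 2--4). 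That left-of-the-line mechanism is the key idea your proposal lacks.

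The second gap is the bound $\int_{1/2}^{\infty}\arg\frac{G_k}{\zeta}(\sigma+iT)\,d\sigma=O_k((\log\log T)^2)$, which you justify only by a ``pole--zero near-cancellation'' heuristic. The paper's proof needs two separate inputs: away from the line, a Jensen-type sign-change count for $\mathrm{Re}\,\frac{G_k}{\zeta}$ combined with the inductive bound $\frac{\zeta^{(k)}}{\zeta}(s)\ll(\log T/\epsilon_0)^k$ (obtained from the RH bound on $\frac{\zeta'}{\zeta}$ and Cauchy's formula), giving $\ll\log\frac{\log T}{\epsilon_0}\cdot\log\frac1{\epsilon_0}$ on $[\tfrac12+2\epsilon_0,a_k]$; and near the line, the separate estimate $\arg G_k(\sigma+iT)\ll_k(\log T)^{2(1-\sigma)}/(\log\log T)^{1/2}$ (via the RH bound on $|\zeta^{(k)}|$, Lemmas \ref{lem4}--\ref{lem5}) together with the RH bound on $\arg\zeta$, integrated over an interval of length $2\epsilon_0$; choosing $\epsilon_0=\frac1{4\log T}$ then yields $(\log\log T)^2$. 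Your sketch supplies neither the induction bound nor the near-line splitting, and the cancellation you invoke is not what controls the count. So while the framework is sound, both quantitative pillars of the proof are missing, and the first one as proposed relies on a false zero-localisation statement.
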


\begin{cor} [Cf. {\cite[Theorem 3]{lev}}] \label{cha12}
Assume that the Riemann hypothesis is true. Then for $0<U<T$ (where $T$ is restricted to satisfy $T>2\pi$), we have
\begin{equation*}
\begin{aligned}
\sum_{\substack{\rho^{(k)}=\beta^{(k)}+i\gamma^{(k)},\\ T<\gamma^{(k)}\leq T+U}} \left(\beta^{(k)}-\frac{1}{2}\right) = \frac{kU}{2\pi}\log{\log{\frac{T}{2\pi}}} &+ \frac{1}{2\pi}\left(\frac{1}{2}\log{2} - k\log{\log{2}}\right)U \\
&+ O\left(\frac{U^2}{T\log{T}}\right) + O_k((\log{\log{T}})^2).
\end{aligned}
\end{equation*}
Here the implicit constant in the error term $O\left(\frac{U^2}{T\log{T}}\right)$ does not depend on any parameter.
\end{cor}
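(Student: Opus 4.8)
The plan is to deduce Corollary~\ref{cha12} from Theorem~\ref{cha1} by differencing at the two heights $T$ and $T+U$. Write
\[
F(x):=\sum_{0<\gamma^{(k)}\le x}\Bigl(\beta^{(k)}-\frac12\Bigr),\qquad M(x):=\frac{kx}{2\pi}\log\log\frac{x}{2\pi}+\frac{1}{2\pi}\Bigl(\frac12\log2-k\log\log2\Bigr)x-k\,\mathrm{Li}\!\left(\frac{x}{2\pi}\right),
\]
where the sum runs over the non-real zeros $\rho^{(k)}=\beta^{(k)}+i\gamma^{(k)}$ of $\zeta^{(k)}$ counted with multiplicity; the quantity to estimate is $F(T+U)-F(T)$. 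For $T>4\pi$ (hence also $T+U>4\pi$) Theorem~\ref{cha1} applies at both endpoints, and since $T<T+U<2T$ gives $\log\log(T+U)=O(\log\log T)$, the two error terms coalesce, so that
\[
F(T+U)-F(T)=M(T+U)-M(T)+O_k\bigl((\log\log T)^2\bigr).
\]

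The heart of the argument is to expand $M(T+U)-M(T)$. Differentiating, one gets
\[
M'(x)=\frac{k}{2\pi}\log\log\frac{x}{2\pi}+\frac{1}{2\pi}\Bigl(\frac12\log2-k\log\log2\Bigr),
\]
the role of the term $k\,\mathrm{Li}(x/2\pi)$ being precisely that $\frac{d}{dx}\bigl(k\,\mathrm{Li}(x/2\pi)\bigr)=\dfrac{k}{2\pi\log(x/2\pi)}$ cancels the contribution $\dfrac{k}{2\pi\log(x/2\pi)}$ coming from differentiating the factor $\frac{kx}{2\pi}$ in the first summand. By the mean value theorem there is $\xi\in(T,T+U)$ with
\[
M(T+U)-M(T)=\frac{kU}{2\pi}\log\log\frac{\xi}{2\pi}+\frac{1}{2\pi}\Bigl(\frac12\log2-k\log\log2\Bigr)U.
\]
Since $g(x):=\log\log(x/2\pi)$ has $g'(x)=\bigl(x\log(x/2\pi)\bigr)^{-1}$, and for $T>4\pi$ the quantity $\log(T/2\pi)$ is bounded below by $\log2$ and of the same order as $\log T$, one has $g'(x)=O\bigl(1/(T\log T)\bigr)$ uniformly for $x\in(T,T+U)$; a second application of the mean value theorem gives $\log\log\frac{\xi}{2\pi}=\log\log\frac{T}{2\pi}+O\bigl(U/(T\log T)\bigr)$. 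Inserting this into the previous display, the error $\frac{kU}{2\pi}\cdot O\bigl(U/(T\log T)\bigr)$ becomes $O\bigl(U^2/(T\log T)\bigr)$, and together with the $O_k((\log\log T)^2)$ above this yields exactly the stated formula for all $T>4\pi$.

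It remains to dispose of the bounded range $2\pi<T\le4\pi$, where Theorem~\ref{cha1} is not directly available and $\log(T/2\pi)$ need not be comparable to $\log T$. Here $F(T+U)-F(T)$ is a finite sum over the non-real zeros of $\zeta^{(k)}$ with imaginary part in $(T,T+U]\subset(2\pi,8\pi)$, whose number is $O(1)$ by \eqref{eq:nkt} and whose real parts are bounded, so $F(T+U)-F(T)=O_k(1)$; comparing this with the (elementary) size of the right‑hand side of the corollary on this compact range — keeping in mind $\mathrm{Li}(y)=\log(y-1)+O(1)$ and $\log\log y=\log(y-1)+O(y-1)$ as $y\to1^+$ when an endpoint is near $2\pi$ — finishes the argument.

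The single genuinely delicate point is the uniformity of the error term $O\bigl(U^2/(T\log T)\bigr)$ in $U$ and $T$: it hinges on $\log(T/2\pi)$ being of the same order of magnitude as $\log T$, which is exactly why the main computation is performed for $T$ bounded away from $2\pi$. Everything else is bookkeeping once Theorem~\ref{cha1} is in hand.
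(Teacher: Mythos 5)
Your main computation follows exactly the route the paper takes: its proof of Corollary \ref{cha12} is a one-line reduction to Theorem \ref{cha1} plus the differencing computation at the end of Section 3 of Levinson--Montgomery, and your mean-value expansion of $M(T+U)-M(T)$, with the observation that $\frac{d}{dx}\bigl(k\,\mathrm{Li}(x/2\pi)\bigr)$ cancels the term $\frac{k}{2\pi\log(x/2\pi)}$, is that computation written out. For $T>4\pi$ this part is correct.

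Two points, however. First, what your argument actually yields for the second-order term is $\frac{kU}{2\pi}\cdot O\bigl(U/(T\log T)\bigr)=O_k\bigl(U^2/(T\log T)\bigr)$: the implied constant carries the factor $k$, and this is intrinsic to the expansion, since the error is $\frac{k}{2\pi}\int_T^{T+U}\bigl(\log\log\frac{x}{2\pi}-\log\log\frac{T}{2\pi}\bigr)\,dx$. So the claimed parameter-free uniformity of $O\bigl(U^2/(T\log T)\bigr)$ is not delivered by your proof (nor can it be by this route); since $k$ is fixed throughout the paper this is largely presentational, but you should not silently drop the $k$. Second, and more seriously, your final paragraph does not dispose of the range $2\pi<T\le 4\pi$. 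There the left-hand side is indeed $O_k(1)$, but the right-hand side is not elementarily bounded: as $T\to 2\pi^{+}$ with, say, $U$ of order $T$, the main term $\frac{kU}{2\pi}\log\log\frac{T}{2\pi}$ tends to $-\infty$ while $U^2/(T\log T)$ and $(\log\log T)^2$ remain bounded, so ``comparing the size of the right-hand side on this compact range'' cannot close the argument (and the parenthetical about $\mathrm{Li}(y)$ as $y\to1^{+}$ is beside the point, as no $\mathrm{Li}$ term occurs in the corollary). That low range must either be excluded, or treated by a genuinely different argument; as written, this step fails. (The paper itself glosses over this by deferring to Levinson--Montgomery, where $T$ is taken large, so the gap is as much in the statement's stated range as in your sketch --- but your claim to have finished it is not justified.)
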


\begin{thm} \label{cha2}
Assume that the Riemann hypothesis is true. Then for $T\geq3$, we have
\begin{equation*}
N_k(T) = \frac{T}{2\pi}\log{\frac{T}{4\pi}} - \frac{T}{2\pi} + O_k\left(\frac{\log{T}}{(\log{\log{T}})^{1/2}}\right).
\end{equation*}
\end{thm}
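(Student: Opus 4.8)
The plan is to count the zeros of $\zeta^{(k)}$ by the argument principle on a rectangular contour, following Berndt's proof of \eqref{eq:nkt}, and to replace the unconditional $O(\log T)$ that Berndt obtains from the horizontal sides by the conditional estimate that Akatsuka established for $k=1$. First I would fix the geometry. Differentiating $\zeta(s)=\chi(s)\zeta(1-s)$ gives $\zeta^{(k)}(s)=\sum_{j=0}^{k}\binom{k}{j}\chi^{(j)}(s)(-1)^{k-j}\zeta^{(k-j)}(1-s)$, whose $j=k$ term dominates as $\operatorname{Re}s\to-\infty$; together with $\zeta^{(k)}(s)=(-1)^{k}(\log2)^{k}2^{-s}(1+o(1))$ as $\operatorname{Re}s\to+\infty$, this lets us choose constants $a<0<b$, depending only on $k$, so that $\zeta^{(k)}(s)\neq0$ for $\operatorname{Re}s\geq b$ and so that every non-real zero of $\zeta^{(k)}$ lies in $a<\operatorname{Re}s<b$. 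For $T\geq2$ not an ordinate of a zero of $\zeta^{(k)}$, the argument principle applied to $\zeta^{(k)}$ on the rectangle $\mathcal C$ with vertices $a+i,\,b+i,\,b+iT,\,a+iT$ — inside which $\zeta^{(k)}$ has only the pole $s=1$, of order $k+1$ — gives
\begin{equation*}
N_k(T)=\frac{1}{2\pi}\,\Delta_{\mathcal C}\arg\zeta^{(k)}(s)+(k+1)+O_k(1),
\end{equation*}
the $O_k(1)$ absorbing the zeros with $0<\operatorname{Im}s\leq1$, while $T$ equal to an ordinate is handled in the usual way.

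I would then evaluate the three easy sides. On $\operatorname{Re}s=b$ one has $\zeta^{(k)}(s)=(-1)^{k}(\log2)^{k}2^{-s}(1+r_k(s))$ with $|r_k(s)|<\tfrac12$, so that side contributes $-T\log2+O_k(1)$; on $\operatorname{Im}s=1$, which has bounded length, the contribution is $O_k(1)$; on $\operatorname{Re}s=a$, writing $\chi^{(k)}=\chi\cdot P_k(\chi'/\chi,(\chi'/\chi)',\dots)$, noting (after a suitable choice of $a$) that $\zeta^{(k)}/\chi^{(k)}$ stays bounded and bounded away from $0$ there, and using the classical estimate for $\arg\chi(\sigma+iT)$, one obtains $T\log\frac{T}{2\pi}-T+O_k(1)$. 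Collecting everything,
\begin{equation*}
N_k(T)=\frac{T}{2\pi}\log\frac{T}{4\pi}-\frac{T}{2\pi}+\frac{1}{2\pi}\,\Delta_{\mathrm{top}}\arg\zeta^{(k)}(s)+O_k(1),
\end{equation*}
where $\Delta_{\mathrm{top}}$ denotes the variation of $\arg\zeta^{(k)}(\sigma+iT)$ as $\sigma$ runs from $b$ down to $a$, so everything reduces to proving $\Delta_{\mathrm{top}}\arg\zeta^{(k)}(s)\ll_k\log T/(\log\log T)^{1/2}$.

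For this bound I would argue as Akatsuka does for $k=1$. By Backlund's device, the variation of the argument over a $\sigma$-interval is $\ll 1+$ the number of zeros on it of $\operatorname{Re}\zeta^{(k)}(\sigma+iT)$ — or of $\operatorname{Im}\zeta^{(k)}(\sigma+iT)$, whichever admits a reference point of large modulus; one genuinely needs this alternative, because the Dirichlet series of $\zeta^{(k)}$ has no constant term, so $\operatorname{Re}\zeta^{(k)}(b+iT)$ by itself need not be bounded away from $0$, whereas $|\zeta^{(k)}(b+iT)|\asymp_k1$. The range $\operatorname{Re}s\geq b_0$, $b_0$ a constant, contributes $O_k(1)$. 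On $\operatorname{Re}s\in[\tfrac12+\eta,b_0]$ I would count the relevant zeros by Jensen's formula on a disc that contains this segment and stays in $\operatorname{Re}s\geq\tfrac12$: there the Riemann Hypothesis supplies $|\zeta^{(k)}(\sigma+it)|\ll_k\exp(C\log t/\log\log t)$ for $\sigma\geq\tfrac12$ (from Littlewood's bound for $\zeta$ via Cauchy's estimate), while the ratio of the two radii is $\asymp_k\eta$, giving a count $\ll_k\log T/(\eta\log\log T)$. On the thin strip $\operatorname{Re}s\in[\tfrac12-\eta,\tfrac12+\eta]$, the same device with a disc reaching a distance $\asymp\eta$ into $\operatorname{Re}s<\tfrac12$ — where the functional equation gives $|\zeta^{(k)}(\sigma+it)|\ll_k t^{O(\eta)}\exp(C\log t/\log\log t)$ — gives a count $\ll_k\eta\log T$. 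On $\operatorname{Re}s\leq\tfrac12-\eta$ the functional equation reflects the estimate into $\operatorname{Re}s\geq\tfrac12$ (the $j=k$ term dominating far to the left, where the argument varies only by $O_k(1)$), contributing $\ll_k\log T/\log\log T$. Taking $\eta\asymp(\log\log T)^{-1/2}$ to balance the second and third contributions yields $\Delta_{\mathrm{top}}\arg\zeta^{(k)}(s)\ll_k\log T/(\log\log T)^{1/2}$, which completes the proof.

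The main obstacle is precisely this last estimate. One must place the Jensen discs so that they leave the region $\operatorname{Re}s\geq\tfrac12$, in which the Riemann Hypothesis controls the size of $\zeta^{(k)}$, only by a margin of order $\eta$; the cost $\eta\log T$ of that margin must then be balanced against the loss $\asymp\eta^{-1}$ in the logarithm of the ratio of radii, and it is this balancing — together with the absence of a leading term in the Dirichlet series of $\zeta^{(k)}$, which forces the $\operatorname{Re}/\operatorname{Im}$ dichotomy for the reference point — that produces the exponent $\tfrac12$ on $\log\log T$, rather than the $1$ occurring, e.g., in the corresponding bound for $S(T)$. Everything else is a $k$-uniform rerun of the arguments of Berndt and Akatsuka, the only delicate point being to keep track of the dependence on $k$, for which Cauchy's estimate turns bounds on $\zeta$ into bounds on $\zeta^{(k)}$.
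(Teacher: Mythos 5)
Your reduction of the theorem to a bound for the argument variation along the top edge is a reasonable alternative to the paper's route (the paper never uses the argument principle directly; it runs Littlewood's lemma twice, with left abscissae $\tfrac12-\delta$ and $\tfrac12-\tfrac\delta2$, subtracts, and lets $\delta\to0$ to get Proposition \ref{prop6}, which expresses $N_k(T)$ through $\arg G_k(\tfrac12+iT)+\arg\zeta(\tfrac12+iT)$). But the heart of the matter — the bound $\ll_k \log T/(\log\log T)^{1/2}$ for the argument term — is exactly where your sketch has a genuine gap. Your count ``$\ll_k\eta\log T$'' on the strip $|\sigma-\tfrac12|\le\eta$ does not follow from ``the same device.'' In Backlund--Jensen counting the reference point must sit far to the right, where $|\zeta^{(k)}|\asymp_k 1$ (your own Re/Im dichotomy lives there); with the centre at such a point $c$, the inner and outer radii differ by $O(\eta)$ while both are $\asymp c-\tfrac12\asymp 1$, so $\log(R/r)\asymp\eta$, and dividing the max-modulus bound $\eta\log T+\log T/\log\log T$ by $\log(R/r)$ returns $\gg\log T$, not $\eta\log T$; even refining by the fact that the circle lies left of the critical line only on an arc of angular measure $\asymp\eta^{1/2}$ one gets a count of order $\eta^{1/2}\log T+\log T/(\eta\log\log T)$, whose optimum is $\log T/(\log\log T)^{1/3}$, not the exponent $\tfrac12$. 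To make $\log(R/r)\asymp1$ you would need a reference point within $O(\eta)$ of the critical line together with a lower bound on $|\zeta^{(k)}|$ there, which you do not have and which RH does not supply pointwise. So your balancing ``$\eta\log T$ versus $\log T/(\eta\log\log T)$ at $\eta\asymp(\log\log T)^{-1/2}$'' is not a proof; it is precisely the step that needs an idea.

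The paper obtains the exponent $\tfrac12$ by a different mechanism. Lemma \ref{lem4} (Cauchy's formula applied to the conditional bound $\log|\zeta|\ll(\log T)^{2(1-\sigma)}/\log\log T$) controls $\zeta^{(k)}$ down to $\sigma\ge\tfrac12-\tfrac1{\log\log T}$, and in Lemma \ref{lem5} the Jensen circle around the far-right centre dips only $\asymp 1/\log\log T$ past the critical line; the saving $(\log\log T)^{-1/2}$ then comes from integrating the bound $\exp\bigl(C(\log T)^{2(1-\sigma')}/\log\log T\bigr)$ over the circle, because the sub-arc on which the exponent is close to its maximum has angular measure $\asymp(\log\log T)^{-1/2}$, while the dip of size $1/\log\log T$ costs only a bounded factor. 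That averaging over the circle, not a split of the segment at distance $\eta$ from the line, is what produces $\log T/(\log\log T)^{1/2}$. A secondary but real issue: on $\sigma\le\tfrac12-\eta$ you assert that the $j=k$ term of the differentiated functional equation dominates, but that is only true for $\sigma\le\sigma_k$ (a large negative constant); in the intermediate range the ratios $\zeta^{(j)}(1-s)/\zeta(1-s)$ can be of size $\exp\bigl(c\log T/\log\log T\bigr)$, and the paper instead controls $\arg\bigl(\tfrac{F}{F^{(k)}}\tfrac{\zeta^{(k)}}{\zeta}\bigr)$ there via conditions 3 and 4 of Lemma \ref{lem1}, which rest on Levinson--Montgomery's theorem that under RH each $\zeta^{(j)}$ has only finitely many zeros left of the critical line. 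Your outline would need both of these inputs supplied before it constitutes a proof.
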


\vspace{2mm}
We write Re$(s)$ and Im$(s)$ (for any $s\in\mathbb{C}$) as $\sigma$ and $t$, respectively. We abbreviate the Riemann hypothesis as RH, and finally,
we define two functions $F(s)$ and $G_k(s)$ as follows:
\begin{definition}
\begin{align*}
F(s) := 2^s\pi^{s-1}\sin{\left(\frac{\pi s}{2}\right)}\Gamma(1-s),\quad G_k(s) := (-1)^k\frac{2^s}{(\log{2})^k}\zeta^{(k)}(s).
\end{align*}
\end{definition}
\noindent
By the above definition of $F(s)$, we can check easily that the functional equation for $\zeta(s)$ states
\begin{equation} \label{eq:fe}
\zeta(s) = F(s)\zeta(1-s).
\end{equation}

\begin{rmk}
The function $F(s)$ appeared in \cite{aka} and \cite[Section 3]{lev} and the function $G_k(s)$ is the $\zeta^{(k)}$-version of the function $G(s)$ in \cite{aka}, which is denoted by $Z_k(s)$ in \cite[Section 3]{lev}. Most of the symbols used in this paper follow those used in \cite{aka}.
\end{rmk}

Before we move to the next section, we intend to give a brief outline of the proofs.
Nevertheless, since the steps of our proofs basically follow those given in \cite{aka} with a few crucial modifications, instead of the outline of the proofs, we present only the main needed modifications related to the proofs.

\vspace{2mm}
First of all, condition 2 of Lemma 2.1 of \cite{aka} is related to the functional equation for $\zeta'(s)$. In our case, we need to consider $\zeta^{(k)}(s)$ for any positive integer $k$. Thus, we obtain a function which consists of terms that are not logarithmic derivatives of some functions so we cannot easily follow the case of $\zeta'(s)$. In the present paper, we take care of these terms in a way that does not involve any calculation on logarithmic derivatives.

Secondly, similar to condition 2, in condition 3 of Lemma 2.1 of \cite{aka}, the factor to be estimated was $\frac{F'}{F}(s)$ which is just the logarithmic derivative of $F(s)$, whereas in the present paper, we need to take care of $\frac{F^{(k)}}{F}(s)$ which is not a logarithmic derivative of any function. Thus, as in condition 2, we estimate this term for any $k$ in a way which does not require any calculation on logarithmic derivatives, and hence we need to take a suitable logarithmic branch of the function $\log{\frac{F^{(k)}}{F}(s)}$.

The next is condition 4 of Lemma 2.1 of \cite{aka}. For $\zeta'(s)$, the term we need to estimate was $\frac{\zeta'}{\zeta}(s)$ which is just the logarithmic derivative of $\zeta(s)$. In \cite{aka}, the inequality $\text{Re}\,\left(\frac{\zeta'}{\zeta}(s)\right)<0$ was obtained, however for $\zeta^{(k)}(s)$, the sign of $\text{Re}\,\left(\frac{\zeta^{(k)}}{\zeta}(s)\right)$ does not seem to stay unchanged in any region defined by $x\leq\sigma<\frac{1}{2},\, t\geq y$ for some $x\leq-1$ and large $y>0$. Nevertheless, since it is sufficient to show that $\frac{\zeta^{(k)}}{\zeta}(s)$ is holomorphic and non-zero, and has bounded argument in some region of the above kind, we shall modify the condition in such a way.

Furthermore, with the modifications of these conditions of the first lemma, the choice of logarithmic branch of the function $\log{\left(\frac{1}{\frac{F^{(k)}}{F}(s)}\frac{\zeta^{(k)}}{\zeta}(s)\right)}$ in the proof of Proposition \ref{prop2} (which generalizes Proposition 2.2 in \cite{aka}) must be taken more carefully so that these conditions can be used in our calculations. In order to evaluate the function $\log{\left(\frac{1}{\frac{F^{(k)}}{F}(s)}\frac{\zeta^{(k)}}{\zeta}(s)\right)}$, we first define the functions $\log{\left(\frac{1}{\frac{F^{(k)}}{F}(s)}\frac{\zeta^{(k)}}{\zeta}(s)\right)}$, $\log{\frac{F^{(k)}}{F}(s)}$, and $\log{\frac{\zeta^{(k)}}{\zeta}(s)}$ independently. Then using the continuities of $\arg{\left(\frac{1}{\frac{F^{(k)}}{F}(s)}\frac{\zeta^{(k)}}{\zeta}(s)\right)}$, $\arg{\frac{F^{(k)}}{F}(s)}$, and $\arg{\frac{\zeta^{(k)}}{\zeta}(s)}$, we observe the difference
$$
\arg{\left(\frac{1}{\frac{F^{(k)}}{F}(s_0)}\frac{\zeta^{(k)}}{\zeta}(s)\right)} - \left(-\arg{\frac{F^{(k)}}{F}(s)} + \arg{\frac{\zeta^{(k)}}{\zeta}(s)}\right)
$$
in the region under evaluation (see the evaluation of $I_{15}$ in Proposition \ref{prop2}).

Finally, the region $\frac{1}{2}<\sigma\leq a$ considered in Lemma 2.3 of \cite{aka} does not work well for $\frac{\zeta^{(k)}}{\zeta}(s)$. The reason is that the current best estimate of $\frac{\zeta^{(k)}}{\zeta}(s)$ depends on the usage of Cauchy's integral formula, hence we need to keep a certain distance between $\frac{1}{2}$ and the infimum of $\sigma$ in the region. Therefore, we put here a small distance $\epsilon_0>0$ (see the statement of our Lemma \ref{lem3}).


\section{Proof of Theorem \ref{cha1} and Corollary \ref{cha12}}
\label{sec:2}

In this section we give the proofs of Theorem 1 and Corollary 2. For that purpose, we need a few lemmas and a proposition which are analogues of those in \cite{aka}.

\vspace{3mm}
The following lemma is a generalization of Lemma 2.1 of \cite{aka} for the case of $\zeta^{(k)}(s)$.

\begin{lem} \label{lem1}
Assume RH. Then there exist $a_k\geq10$, $\sigma_k\leq-1$, and $t_k\geq\max{\{a_k^2,-\sigma_k\}}$ such that the following conditions are satisfied:
\begin{enumerate}
\item[1.] $\displaystyle \left|G_k(s)-1\right| \leq \frac{1}{2}\left(\frac{2}{3}\right)^{\sigma/2} $,
\quad for any $\sigma\geq a_k$;
\item[2.] $\displaystyle \left|\sum_{j=1}^k {{k}\choose{j}} (-1)^{j-1} \frac{1}{\frac{F^{(k)}}{F^{(k-j)}}(s)}\frac{\zeta^{(j)}}{\zeta}(1-s)\right| \leq 2^\sigma $, \quad for $\sigma\leq\sigma_k$ and $t\geq2$;
\item[3.] $\displaystyle \left|\frac{F^{(k)}}{F}(s)\right|\geq1 $
holds in the region $\sigma_k\leq\sigma\leq\frac{1}{2},\, t\geq t_k-1$.
Furthermore, we can take the logarithmic branch of $\displaystyle \log{\frac{F^{(k)}}{F}(s)} $
in that region such that it is holomorphic there and
$$ \frac{\alpha_k\pi}{6} < \arg{\frac{F^{(k)}}{F}(s)} < \frac{\beta_k\pi}{6} $$
holds, where
\begin{align*}
(\alpha_k,\beta_k)=
\begin{cases}
(5,7) \text{ if } k\text{ is odd},\\
(-1,1) \text{ if } k\text{ is even};
\end{cases}
\end{align*}
\item[4.] $\displaystyle \frac{\zeta^{(k)}}{\zeta}(s)\neq0 $ holds in the region $\sigma_k\leq\sigma<\frac{1}{2},\, t\geq t_k-1$.
Furthermore, we can take the logarithmic branch of $\displaystyle \log{\frac{\zeta^{(k)}}{\zeta}(s)} $
in that region such that it is holomorphic there and
$$ \frac{k\pi}{2} < \arg{\frac{\zeta^{(k)}}{\zeta}(s)} < \frac{3k\pi}{2} $$
holds;
\item[5.] $\displaystyle \zeta(\sigma+it_k)\neq0,\, \zeta^{(k)}(\sigma+it_k)\neq0 $, \quad for all $\sigma\in\mathbb{R}$.
\end{enumerate}
\end{lem}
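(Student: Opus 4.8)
The plan is to construct the three parameters in the order $a_k$, then $\sigma_k$, then $t_k$, verifying the conditions one at a time and enlarging the parameters as the later ones demand. Condition~1 is elementary: since $(-1)^k\zeta^{(k)}(s)=\sum_{n\ge2}(\log n)^kn^{-s}$ with leading term $(\log 2)^k2^{-s}$, we have $G_k(s)-1=\frac{2^s}{(\log2)^k}\sum_{n\ge3}(\log n)^kn^{-s}$, and bounding $(\log n)^k\ll_kn^{1/4}$ for $n\ge2$ gives $|G_k(s)-1|\ll_k2^\sigma3^{1/4-\sigma}\ll_k(2/3)^\sigma$; as $(2/3)^{\sigma/2}\to0$ this is $\le\tfrac12(2/3)^{\sigma/2}$ once $\sigma\ge a_k$, and any such $a_k$ may be taken $\ge10$.

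Conditions~2 and~3 rest on Stirling-type asymptotics for $F^{(m)}/F$, $0\le m\le k$. From the definition of $F$ one has $\frac{F'}{F}(s)=\log(2\pi)+\tfrac\pi2\cot\tfrac{\pi s}{2}-\tfrac{\Gamma'}{\Gamma}(1-s)$; for $\sigma\le\tfrac12$ and $t\ge1$ the cotangent is $-\tfrac{\pi i}{2}$ up to an exponentially small error, and $\tfrac{\Gamma'}{\Gamma}(1-s)=\log(1-s)+O(|1-s|^{-1})$ by Stirling, so $\frac{F'}{F}(s)=-\log(1-s)+\log2\pi-\tfrac{\pi i}{2}+O_k(|1-s|^{-1})$; since $\arg(1-s)\to-\tfrac\pi2$, this is a number close to the negative real axis of modulus $\asymp\log|1-s|$ and argument within $O_k((t\log t)^{-1})$ of $\pi$. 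Because $(\tfrac{F'}{F})^{(m)}(s)=O_m(|1-s|^{-m})$ for $m\ge1$, the Bell-polynomial (Fa\`a di Bruno) formula for $F^{(m)}/F$ gives $\frac{F^{(m)}}{F}(s)=\big(\tfrac{F'}{F}(s)\big)^m\big(1+O_m((|1-s|\log|1-s|)^{-1})\big)$ uniformly for $\sigma\le\tfrac12$ as $|1-s|\to\infty$. This yields Condition~3 directly: for $t_k$ large, $\big|\tfrac{F^{(k)}}{F}(s)\big|\ge1$ and $\arg\tfrac{F^{(k)}}{F}(s)$ lies within $\pi/6$ of $k\pi$, i.e. of $\pi$ or $0$ modulo $2\pi$ according as $k$ is odd or even; as $\{\sigma_k\le\sigma\le\tfrac12,\ t\ge t_k-1\}$ is convex and $F^{(k)}/F$ is non-vanishing there, a holomorphic branch of $\log\tfrac{F^{(k)}}{F}$ exists and, its value being fixed at one point, $\arg$ is trapped in the stated interval throughout. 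For Condition~2, Leibniz applied to $\zeta(s)=F(s)\zeta(1-s)$ gives $\zeta^{(k)}(s)=\sum_{j=0}^k\binom kj(-1)^jF^{(k-j)}(s)\zeta^{(j)}(1-s)$, and dividing the $j\ge1$ part by $F^{(k)}(s)\zeta(1-s)$ is exactly the sum in Condition~2; for $\sigma\le\sigma_k$, $t\ge2$ we have $|F^{(k-j)}(s)/F^{(k)}(s)|\asymp(\log|1-s|)^{-j}$ while $|\zeta^{(j)}(1-s)/\zeta(1-s)|\ll_j(\log2)^j2^\sigma$ (as $\mathrm{Re}\,(1-s)\ge1-\sigma_k$ is large), so the whole sum is $\ll_k2^\sigma/\log|1-s|\le2^\sigma$ once $\sigma_k$ is negative enough.

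Condition~4 is the crux. Holomorphy and non-vanishing of the denominator are from RH; I would deduce everything else from the claim that, for $1\le j\le k$, the ratio $\tfrac{\zeta^{(j)}}{\zeta^{(j-1)}}(s)$ is holomorphic, non-vanishing, and of negative real part on $\sigma<\tfrac12$ for $t$ large. Granting this, induction on $j$ (with $\zeta^{(0)}=\zeta\neq0$ by RH and $\zeta^{(j)}=\zeta^{(j-1)}\cdot\tfrac{\zeta^{(j)}}{\zeta^{(j-1)}}$) gives $\zeta^{(j)}(s)\neq0$ there, so $\tfrac{\zeta^{(k)}}{\zeta}(s)=\prod_{j=1}^k\tfrac{\zeta^{(j)}}{\zeta^{(j-1)}}(s)$ is holomorphic and non-zero; and setting $\arg\tfrac{\zeta^{(k)}}{\zeta}(s):=\sum_{j=1}^k\arg\tfrac{\zeta^{(j)}}{\zeta^{(j-1)}}(s)$, each summand in $(\tfrac\pi2,\tfrac{3\pi}{2})$, gives a holomorphic branch of $\log\tfrac{\zeta^{(k)}}{\zeta}$ with $\arg$ in $(\tfrac{k\pi}{2},\tfrac{3k\pi}{2})$ — and, as \cite{aka} observes, the product itself need not have negative real part, which is exactly why the condition is phrased through $\arg$ rather than through $\mathrm{Re}<0$. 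The case $j=1$ is, under RH, $\tfrac{\partial}{\partial\sigma}\log|\zeta(\sigma+it)|<0$ for $\sigma<\tfrac12$, $t$ large, which follows by writing $\tfrac{\zeta'}{\zeta}(s)=\tfrac{\xi'}{\xi}(s)-\tfrac1s-\tfrac1{s-1}+\tfrac12\log\pi-\tfrac12\tfrac{\Gamma'}{\Gamma}(s/2)$ ($\xi$ the completed zeta function), where $\mathrm{Re}\,\tfrac{\xi'}{\xi}(s)=\sum_\gamma\tfrac{\sigma-1/2}{(\sigma-1/2)^2+(t-\gamma)^2}<0$ and the Stirling term $-\tfrac12\mathrm{Re}\,\tfrac{\Gamma'}{\Gamma}(s/2)\to-\infty$ dominates the bounded remainder; the cases $j\ge2$ are the analogous Speiser-type statements for the higher derivatives, which I would establish either by citation or by the functional-equation argument below — far to the left via $\zeta^{(j)}(s)\approx F^{(j)}(s)$, so $\tfrac{\zeta^{(j)}}{\zeta^{(j-1)}}(s)\approx\tfrac{F'}{F}(s)$ is a large negative real (this also pins down the branch: there $\tfrac{\zeta^{(k)}}{\zeta}(s)\approx(\tfrac{F'}{F}(s))^k$ has $\arg\approx k\pi$), and for a bounded range $\sigma<\tfrac12$ by the local analysis described below. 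Condition~5 is then immediate: the imaginary parts of the zeros of $\zeta$ and of $\zeta^{(k)}$ form a countable set, so $t_k$ can be chosen $\ge\max\{a_k^2,-\sigma_k\}$, large enough for Conditions~1, 3, 4, and avoiding that set.

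The real obstacle is the non-vanishing (and the sign of $\mathrm{Re}\,\tfrac{\zeta^{(j)}}{\zeta^{(j-1)}}$) just to the left of the critical line for large $t$, since the functional-equation device that handles $\sigma$ far left degrades there — $\zeta(1-s)$ itself being small near a zero of $\zeta$. My approach would be: using the $k$-fold differentiated functional equation and the estimates above, write $\zeta^{(k)}(s)=F^{(k)}(s)\big\{\sum_{j=0}^k\binom kjh^j\zeta^{(j)}(1-s)+O_k\big(|1-s|^{-1}\textstyle\sum_jh^j|\zeta^{(j)}(1-s)|\big)\big\}$ with $h=-F(s)/F'(s)$, $\mathrm{Re}\,h>0$, $h\asymp(\log t)^{-1}$; the bracket is a small perturbation of $\zeta(1-s)$, and near a zero $\rho=\tfrac12+i\gamma$ of $\zeta$ (of any multiplicity, under RH) a local expansion shows its zeros are displaced to $\mathrm{Re}\,s>\tfrac12$, while off a neighbourhood of the zeros of $\zeta$ it stays bounded away from $0$; this gives $\zeta^{(k)}(s)\neq0$ for $\sigma<\tfrac12$, $t$ large, and with more care the sign of $\mathrm{Re}\,\tfrac{\zeta^{(j)}}{\zeta^{(j-1)}}$ there. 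One must keep in mind that only RH — not simplicity of the zeros of $\zeta$ — is assumed. Everything else (Conditions~1, 2, 3, 5 and the $\arg$-bookkeeping) is routine Stirling asymptotics, the differentiated functional equation, and continuity.
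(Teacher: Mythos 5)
Your treatment of conditions 1, 2, 3 and 5 is essentially the paper's: a Dirichlet-series bound for condition 1 (the paper simply cites Levinson--Montgomery), the Stirling-type asymptotic $F^{(m)}(s)=F(s)(f'(s))^m(1+o(1))$ combined with the Leibniz-differentiated functional equation for conditions 2 and 3, and a countability/identity-theorem choice of $t_k$ for condition 5. The divergence, and the problem, is condition 4, which you rightly call the crux. The paper does not prove non-vanishing of $\zeta^{(j)}$ to the left of the critical line: it quotes Levinson--Montgomery's Corollary of Theorem 7 (RH implies each $\zeta^{(j)}$ has only \emph{finitely many} non-real zeros in $\sigma<\tfrac12$, so one may simply take $t_k$ above all of them), and then quotes their pp.~64--65 estimate $\mathrm{Re}\,\bigl(\tfrac{\zeta^{(j)}}{\zeta^{(j-1)}}(s)\bigr)\le-\tfrac29\log{|s|}+O_{\sigma_k}(1)$ on the bounded strip $\sigma_k\le\sigma<\tfrac12$, $t$ large, to get \eqref{eq:claim1}; the argument bookkeeping that follows is the same as yours. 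Your $j=1$ Speiser-type computation is fine, but for $j\ge2$ you offer only ``citation or the functional-equation argument below,'' without identifying the actual result needed for general $k$, and the argument you do elaborate has a genuine gap.

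Concretely: with $h\asymp(\log t)^{-1}$, the claim that the bracket $\sum_{j}\binom kj h^j\zeta^{(j)}(1-s)$ ``stays bounded away from $0$ off a neighbourhood of the zeros of $\zeta$'' is not available in the form you need. Off excluded disks of radius $\delta$ around the zeros what must be shown is $\bigl|\sum_{j\ge1}\binom kj h^j\tfrac{\zeta^{(j)}}{\zeta}(1-s)\bigr|<1$, but there one only has $\tfrac{\zeta^{(j)}}{\zeta}(w)\ll(\log t/\delta)^j$, so beating $h^{-j}\asymp(\log t)^j$ would force $\delta\gg1$; this is impossible because consecutive zero ordinates are spaced $\asymp1/\log t$ apart, i.e.\ the perturbation parameter is of the same order as the zero spacing, and the zero-by-zero local expansion/Rouch\'e picture does not patch together uniformly in $t$ (this is precisely why the known proof for general $k$, Levinson--Montgomery's, is a global counting argument whose conclusion is only finiteness of the non-real zeros left of $\sigma=\tfrac12$). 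Moreover, even granting non-vanishing of each $\zeta^{(j)}$, your sketch does not actually deliver the sign of $\mathrm{Re}\,\tfrac{\zeta^{(j)}}{\zeta^{(j-1)}}$ for $j\ge2$ near the critical line --- ``with more care'' is exactly the missing content, and that ratio concerns zeros of $\zeta^{(j-1)}$, not of $\zeta$, so the perturbation-of-$\zeta(1-s)$ setup does not address it directly. If you replace the sketched perturbative argument by the precise citations (Levinson--Montgomery, Corollary of Theorem 7, and their pp.~64--65 estimate on the strip, which itself relies on that finiteness result), your proof coincides with the paper's; as written, the elaborated alternative does not close.
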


\begin{proof}

\begin{enumerate}
\item[1.] See \cite[(3.2) (p. 54)]{lev}.

\item[2.] We start by estimating $\frac{F^{(k)}}{F^{(k-j)}}(s)$ ($j=1,2,\cdots,k$) in the region $\sigma<1,\, t\geq2$.
We set $f(s):=\left(\frac{1}{2}-s\right)\left(\log{(1-s)} - \log{(2\pi)} + \frac{\pi i}{2} \right) + s + O(1)$, where $f(s)$ is an analytic function and
\begin{equation*}
f'(s) = -\log{(1-s)} + O(1),\quad\quad f^{(j)}(s) = O(1) \quad\quad (j\geq2).
\end{equation*}
As in \cite[pp. 54--55]{lev}, we can write
\begin{equation*}
F(s) = \exp(f(s)).
\end{equation*}
Using methods similar to \cite[Lemma 6 (p. 133)]{gon} and \cite[pp. 54--55]{lev},
we can show that
\begin{equation} \label{eq:Fj}
F^{(j)}(s) = F(s)(f'(s))^j\left(1+O\left(\frac{1}{|\log{s}|^2}\right)\right)
\end{equation}
holds for any positive integer $j$. In consequence, for $j=1,2,\cdots,k$, we have
\begin{align*}
\left|\frac{F^{(k)}}{F^{(k-j)}}(s)\right|
&= \left|(f'(s))^j\left(1+O\left(\frac{1}{|\log{s}|^2}\right)\right)\right| \\
&\geq (\log{|1-s|})^j - \left|O\left((\log{|1-s|})^{j-1}\right)\right|.
\end{align*}
Certainly, this also holds in the region $\sigma\leq-1,\, t\geq2$, so for any positive integer $k$, we can take $\sigma_{k_1}\leq-1$ sufficiently small (i.e. sufficiently large in the negative direction) so that for any $s$ with $\sigma\leq\sigma_{k_1}$ and $t\geq2$, we have
\begin{equation} \label{eq:Fkkj}
\left|\frac{F^{(k)}}{F^{(k-j)}}(s)\right| \geq \frac{1}{2k} (\log{|1-s|})^j \geq \frac{1}{2k}(\log{(1-\sigma)})^j.
\end{equation}

\hspace{5mm}
Next we estimate $\frac{\zeta^{(j)}}{\zeta}(1-s)$ ($j=1,2,\cdots,k$).
In the region $\sigma\leq-1,\, t\geq2$, we have
\begin{align*}
\left|\zeta^{(j)}(1-s)\right| &\leq \left|\frac{(\log{2})^j}{2^{1-s}}\right| + \left|\sum_{n=3}^\infty \frac{(\log{n})^j}{n^{1-s}}\right|
\leq \frac{1}{2}(\log{2})^j2^\sigma + \int_2^\infty \frac{(\log{x})^j}{x^{1-\sigma}}dx\\
&= 2^\sigma\left(\frac{1}{2}(\log{2})^j + \sum_{l=0}^j\frac{(\log{2})^{j-l}\frac{j!}{(j-l)!}}{(-\sigma)^{l+1}}\right)
\end{align*}
and
\begin{equation*}
\left|\zeta(1-s)\right| \geq 1 - \left|\sum_{n=2}^\infty \frac{1}{n^{1-s}}\right|
\geq 1 - \sum_{n=2}^\infty \frac{1}{n^2} = 2 - \frac{\pi^2}{6}.
\end{equation*}
Thus,
\begin{equation} \label{eq:zetaj}
\left|\frac{\zeta^{(j)}}{\zeta}(1-s)\right| \leq \frac{2^\sigma}{2 - \frac{\pi^2}{6}}\left(\frac{1}{2}(\log{2})^j + \sum_{l=0}^j\frac{(\log{2})^{j-l}\frac{j!}{(j-l)!}}{(-\sigma)^{l+1}}\right).
\end{equation}

\hspace{5mm}
Now combining \eqref{eq:Fkkj} and \eqref{eq:zetaj}, for $\sigma\leq\sigma_{k_1}$ and $t\geq2$, we have
\begin{align*}
&\left|\sum_{j=1}^k {{k}\choose{j}} (-1)^{j-1} \frac{1}{\frac{F^{(k)}}{F^{(k-j)}}(s)}\frac{\zeta^{(j)}}{\zeta}(1-s)\right| \\
&\quad\quad\quad\quad\quad
\leq \sum_{j=1}^k {{k}\choose{j}} \frac{1}{\left|\frac{F^{(k)}}{F^{(k-j)}}(s)\right|} \left|\frac{\zeta^{(j)}}{\zeta}(1-s)\right| \\
&\quad\quad\quad\quad\quad
\leq 2^\sigma\frac{2k}{2 - \frac{\pi^2}{6}} \sum_{j=1}^k {{k}\choose{j}} \frac{1}{(\log{(1-\sigma)})^j} \left(\frac{1}{2}(\log{2})^j + \sum_{l=0}^j\frac{(\log{2})^{j-l}\frac{j!}{(j-l)!}}{(-\sigma)^{l+1}}\right).
\end{align*}

\hspace{5mm}
Since for any positive integer $k$,
\begin{align*}
\lim_{\sigma\rightarrow-\infty} \frac{2k}{2 - \frac{\pi^2}{6}} \sum_{j=1}^k {{k}\choose{j}} \frac{1}{(\log{(1-\sigma)})^j} \left(\frac{1}{2}(\log{2})^j + \sum_{l=0}^j\frac{(\log{2})^{j-l}\frac{j!}{(j-l)!}}{(-\sigma)^{l+1}}\right) = 0,
\end{align*}
we can take $\sigma_k\leq\sigma_{k_1}$ ($\leq-1$) so that
\begin{align*}
\frac{2k}{2 - \frac{\pi^2}{6}} \sum_{j=1}^k {{k}\choose{j}} \frac{1}{(\log{(1-\sigma)})^j} \left(\frac{1}{2}(\log{2})^j + \sum_{l=0}^j\frac{(\log{2})^{j-l}\frac{j!}{(j-l)!}}{(-\sigma)^{l+1}}\right) \leq 1
\end{align*}
holds for any $\sigma\leq\sigma_k$.
This implies that
\begin{align*}
 \left|\sum_{j=1}^k {{k}\choose{j}} (-1)^{j-1} \frac{1}{\frac{F^{(k)}}{F^{(k-j)}}(s)}\frac{\zeta^{(j)}}{\zeta}(1-s)\right| \leq 2^\sigma
\end{align*}
holds for $\sigma\leq\sigma_k$, $t\geq2$.

\vspace{2mm} \hspace{5mm}
Now with the above $\sigma_k$, we are going to find $t_k\geq\max{\{a_k^2, -\sigma_k\}}$ for which conditions 3 to 5 hold.

\item[3.] We start by examining condition 3. We first consider the region $\sigma_k\leq\sigma\leq\frac{1}{2},\, t\geq99$.
It follows from \eqref{eq:Fj} that in this region,
\begin{equation} \label{eq:Fk}
F^{(k)}(s) = F(s)(-\log{(1-s)} + O(1))^k\left(1+O\left(\frac{1}{|\log{s}|^2}\right)\right)
\end{equation}
holds.
This gives us,
\begin{align*}
\left|\frac{F^{(k)}}{F}(s)\right| \geq |(\log{(1-s)})^k| - \left|O_{\sigma_k}((\log{t})^{k-1})\right| \geq (\log{t})^k - \left|O_{\sigma_k}((\log{t})^{k-1})\right|
\end{align*}
for $\sigma_k\leq\sigma\leq\frac{1}{2}$ and $t\geq99$.
Thus, for any integer $k\geq1$, we can take $t_{k_1}\geq100$ such that
\begin{equation} \label{eq:cond31}
\left|\frac{F^{(k)}}{F}(s)\right| \geq 1
\end{equation}
for $\sigma_k\leq\sigma\leq\frac{1}{2}$ and $t\geq t_{k_1}-1$.

\hspace{5mm}
We note from \eqref{eq:Fk} that $\frac{F^{(k)}}{F}(s) = (-1)^k(\log{t})^k + O((\log{t})^{k-1})$ when $\sigma_k\leq\sigma\leq\frac{1}{2}$ and $t\geq99$.
Consequently, for odd integer $k\geq1$, we can find sufficiently large $t'_{k_2}\geq100$ such that
\begin{equation*}
\frac{5\pi}{6} < \arg{\frac{F^{(k)}}{F}(s)} < \frac{7\pi}{6}
\end{equation*}
holds for $\sigma_k\leq\sigma\leq\frac{1}{2}$ and $t\geq t'_{k_2}-1$.
Similarly, when $k$ is even, we can also find sufficiently large $t''_{k_2}\geq100$ such that
\begin{equation*}
-\frac{\pi}{6} < \arg{\frac{F^{(k)}}{F}(s)} < \frac{\pi}{6}
\end{equation*}
holds for $\sigma_k\leq\sigma\leq\frac{1}{2}$ and $t\geq t''_{k_2}-1$.
Since all zeros and poles of $F(s)$ lie on $\mathbb{R}$, $\frac{F^{(k)}}{F}(s)$ has no poles for $t>0$. This along with \eqref{eq:cond31} implies that $\log{\frac{F^{(k)}}{F}(s)}$ is holomorphic in the region with this branch.
Thus setting
\begin{align*}
(\alpha_k,\beta_k) :=
\begin{cases}
(5,7), &\quad \text{if}\,\, k \,\,\text{is odd},\\
(-1,1), &\quad \text{if}\,\, k \,\,\text{is even};
\end{cases}
\end{align*}
and
\begin{align*}
t_{k_2} :=
\begin{cases}
t'_{k_2}, &\quad \text{if}\,\, k \,\,\text{is odd},\\
t''_{k_2}, &\quad \text{if}\,\, k \,\,\text{is even};
\end{cases}
\end{align*}
we find that $\log{\frac{F^{(k)}}{F}(s)}$ is holomorphic and that
\begin{equation*}
\frac{\alpha_k\pi}{6} < \arg{\frac{F^{(k)}}{F}(s)} < \frac{\beta_k\pi}{6}
\end{equation*}
holds in the region $\sigma_k\leq\sigma\leq\frac{1}{2},\, t\geq t_{k_2}-1$.

\vspace{2mm} \hspace{5mm}
By the above calculations, we see that $\max{\{t_{k_1}, t_{k_2}, a_k^2, -\sigma_k\}}$ is a candidate for $t_k$. Thus we have proven that $t_k\geq\max{\{a_k^2, -\sigma_k\}}$ for which condition 3 holds exists. Since we want $t_k$ to also satisfy conditions 4 and 5, we need to examine those conditions to completely prove the existence of $t_k$.

\item[4.] Referring to \cite[Corollary of Theorem 7 (p. 51)]{lev}, we know that RH implies that for any positive integer $j$, $\zeta^{(j)}(s)$ has at most a finite number of non-real zeros in $\sigma<\frac{1}{2}$. Hence we can number all the non-real zeros of $\zeta^{(j)}(s)$ in $\sigma<\frac{1}{2}$ as $\rho_1^{(j)}, \rho_2^{(j)}, \rho_3^{(j)}, \cdots, \rho_{m_j}^{(j)}$ ($\rho_l^{(j)} = \beta_l^{(j)} + i\gamma_l^{(j)}$) for some integer $m_j\geq2$ (note that if $\zeta^{(j)}\left(\rho^{(j)}\right) = 0$, then $\zeta^{(j)}\left(\overline{\rho^{(j)}}\right) = 0$, so $m_j\geq2$) in the order such that $\gamma_l^{(j)}\leq\gamma_{l+1}^{(j)}$ for all $1\leq l\leq m_j-1$.
Therefore, $\zeta^{(j)}(s)\neq0$ when $\sigma<\frac{1}{2}$ and $t\geq\gamma_{m_j}^{(j)}+1$.
We set $t_{k_3} := \displaystyle{\max_{1\leq j\leq k}{(\gamma_{m_j}^{(j)}+2)}}$, then for all $j=1,2,\cdots,k$, we have
\begin{equation} \label{eq:4zenhan}
\zeta^{(j)}(s)\neq0
\end{equation}
in the region $\sigma<\frac{1}{2},\, t\geq t_{k_3}-1$.

\hspace{5mm}
Next we show that we can take the logarithmic branch of $\log{\frac{\zeta^{(k)}}{\zeta}(s)}$ in the region $\sigma_k\leq\sigma<\frac{1}{2},\, t\geq t_{k_4}-1$ for some $t_{k_4}\geq100$, so that it is holomorphic there and
\begin{align*}
\frac{k\pi}{2} < \arg{\frac{\zeta^{(k)}}{\zeta}(s)} < \frac{3k\pi}{2}
\end{align*}
holds there by first claiming that we can find some $t_{k_4}\geq t_{k_3}$ for which
\begin{equation} \label{eq:claim1}
\text{Re}\left(\frac{\zeta^{(j)}}{\zeta^{(j-1)}}(s)\right) < 0 \quad\quad (\sigma_k\leq\sigma<\frac{1}{2},\, t\geq t_{k_4}-1)
\end{equation}
holds for all $j=1,2,\cdots,k$.
We first note that for any $j=1,2,\cdots,k$, $\frac{\zeta^{(j)}}{\zeta^{(j-1)}}(s)$ is holomorphic and has no zeros in the region defined by $\sigma<\frac{1}{2}$ and $t\geq t_{k_3}-1$.

\hspace{5mm}
To show this, we refer to \cite[pp. 64--65]{lev} and we can show that for any $j=1,2,\cdots,k$,
\begin{align*}
\text{Re}\left(\frac{\zeta^{(j)}}{\zeta^{(j-1)}}(s)\right) \leq -\frac{2}{9}\log{|s|} + O_{\sigma_k}(1)
\end{align*}
holds when $\sigma_k\leq\sigma<\frac{1}{2}$, and $ t\geq t_{k_3}-1$.
Thus, we can take $t_{k_4}\geq t_{k_3}$ such that \eqref{eq:claim1} holds for all $j=1,2,\cdots,k$ .

\hspace{5mm}
The above immediately implies that for each $j=1,2,\cdots,k$, there exists an integer $l_j$ such that
\begin{equation} \label{eq:forlj}
\frac{\pi}{2}+2l_j\pi<\arg{\frac{\zeta^{(j)}}{\zeta^{(j-1)}}(s)}<\frac{3\pi}{2}+2l_j\pi
\end{equation}
holds for $\sigma_k\leq\sigma<\frac{1}{2},\, t\geq t_{k_4}-1$.
We then choose the logarithmic branch of each of $\log{\frac{\zeta^{(j)}}{\zeta^{(j-1)}}(s)}$ such that each $l_j$ in \eqref{eq:forlj} is zero and take the logarithmic branch of $\log{\frac{\zeta^{(k)}}{\zeta}(s)}$ so that
\begin{equation*}
\arg{\frac{\zeta^{(k)}}{\zeta}(s)} = \sum_{j=1}^k \arg{\frac{\zeta^{(j)}}{\zeta^{(j-1)}}(s)}
\end{equation*}
holds in the region $\sigma_k\leq\sigma<\frac{1}{2},\, t\geq t_{k_4}-1$.
Note that from \eqref{eq:4zenhan} and the analyticity of $\zeta^{(k)}(s)$ in $\sigma<\frac{1}{2}$ (also note that we are assuming RH thus $\zeta(s)\neq0$ when $\sigma<\frac{1}{2}$ and $t\geq t_{k_4}-1$), $\log{\frac{\zeta^{(k)}}{\zeta}(s)}$ is holomorphic in this region with this branch.
We then obtain a holomorphic function $\log{\frac{\zeta^{(k)}}{\zeta}(s)}$ with inequalities
\begin{equation*}
\frac{k\pi}{2} < \arg{\frac{\zeta^{(k)}}{\zeta}(s)} < \frac{3k\pi}{2}
\end{equation*}
in the region $\sigma_k\leq\sigma<\frac{1}{2},\, t\geq t_{k_4}-1$.

\vspace{2mm} \hspace{5mm}
Combining the proof of condition 3 and the above calculations, we find that $\max\{t_{k_1},$ $t_{k_2}, t_{k_4}, a_k^2, -\sigma_k\}$ is a candidate for $t_k$. Therefore we have proven that $t_k\geq\max{\{a_k^2, -\sigma_k\}}$ for which conditions 3 and 4 hold exists.

\item[5.] Now we set $t_{k_5}:=\max{\{t_{k_1}, t_{k_2}, t_{k_4}, a_k^2, -\sigma_k\}}$.
\begin{itemize}
\item Since we are assuming RH, $\zeta(\sigma+it)\neq0$ for any $t>0$ if $\sigma\neq\frac{1}{2}$.
\item According to \cite[Table 1 (p. 678)]{spi2}, $\zeta'(\sigma+it)\neq0$ for any $t\in\mathbb{R}$ if $\sigma\geq3$ and $\zeta''(\sigma+it)\neq0$ for any $t\in\mathbb{R}$ if $\sigma\geq5$.
According to \cite[Theorem 1]{spi2}, for $k\geq3$, $\zeta^{(k)}(\sigma+it)\neq0$ for any $t\in\mathbb{R}$ if $\sigma\geq\frac{7}{4}k+2$.
Indeed, we can check that for $k=1$, $\frac{7}{4}+2>3$ and for $k=2$, $\frac{7}{2}+2>5$,
thus for any positive integer $k$,
\begin{align*}
\zeta^{(k)}(\sigma+it)\neq0 \quad\quad (\sigma\geq\frac{7}{4}k+2,\, t\in\mathbb{R}).
\end{align*}
\item Since $t_{k_5}\geq t_{k_3}$, from \eqref{eq:4zenhan}, we have
$\zeta^{(k)}(\sigma+it)\neq0$ for $\sigma<\frac{1}{2}$ and $t\geq t_{k_5}$.
\end{itemize}

Hence, for any positive integer $k$, we only need to find $t_k\in[t_{k_5}+1,t_{k_5}+2]$ for which
\begin{align*}
\zeta\left(\frac{1}{2}+it_k\right)\neq0 \quad\quad \text{and} \quad\quad \zeta^{(k)}(\sigma+it_k)\neq0\quad \text{for } \frac{1}{2} \leq \sigma \leq \frac{7}{4}k+2
\end{align*}
hold.
Note that this is possible by the identity theorem for complex analytic functions.
Thus, we have shown that $t_k$ defined above satisfies $t_k\geq\max{\{a_k^2,-\sigma_k\}}$ and also conditions 3 to 5.
\end{enumerate}
\end{proof}

\begin{rmk}
For $k=1$ and $k=2$, more precise results are known. Refer to \cite{aka} and \cite{ade}, respectively.
These results are obtained based on the works of Speiser \cite{spe}, Spira \cite{spi3}, and Yildirim \cite{yil1} (also \cite{yil2}) on the zeros of $\zeta'(s)$ and $\zeta''(s)$.
\end{rmk}


\begin{prop} \label{prop2}
Assume RH. Take $a_k$ and $t_k$ which satisfy all conditions of Lemma \ref{lem1}.
Then for $T\geq t_k$ which satisfies $\zeta^{(k)}(\sigma+iT)\neq0$ and $\zeta(\sigma+iT)\neq0$ for any $\sigma\in\mathbb{R}$, we have
\begin{align*}
\sum_{\substack{\rho^{(k)}=\beta^{(k)}+i\gamma^{(k)},\\ 0<\gamma^{(k)}\leq T}} \left(\beta^{(k)}-\frac{1}{2}\right) &= \frac{kT}{2\pi}\log{\log{\frac{T}{2\pi}}} + \frac{1}{2\pi}\left(\frac{1}{2}\log{2} - k\log{\log{2}}\right)T - k\operatorname{Li}\left(\frac{T}{2\pi}\right)\\
&\quad+ \frac{1}{2\pi}\int_{1/2}^{a_k} \left(-\arg{\zeta(\sigma+iT)}+\arg{G_k(\sigma+iT)}\right)d\sigma + O_k(1),
\end{align*}
where the logarithmic branches are taken so that $\log{\zeta(s)}$ and $\log{G_k(s)}$ tend to 0 as $\sigma\rightarrow\infty$ and are holomorphic in $\mathbb{C}\backslash\{\rho+\lambda \,|\, \zeta(\rho)=0 \,\,\text{or}\,\, \infty, \, \lambda\leq0\}$ and $\mathbb{C}\backslash\{\rho^{(k)}+\lambda \,|\, \zeta^{(k)}(\rho^{(k)})=0 \,\,\text{or}\,\, \infty, \, \lambda\leq0\}$, respectively.
\end{prop}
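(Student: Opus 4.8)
The plan is to apply Littlewood's lemma to $G_k(s)$ on a tall thin rectangle whose left side lies on the critical line. Fix a large $b$ (to be sent to $+\infty$) and consider the rectangle with vertices $\tfrac12$, $b$, $b+iT$, $\tfrac12+iT$; since $G_k(s)\to1$ as $\sigma\to+\infty$, Littlewood's lemma together with the branch of $\log G_k$ fixed in the statement gives, after letting $b\to\infty$,
\begin{equation*}
2\pi\sum_{\substack{\rho^{(k)}=\beta^{(k)}+i\gamma^{(k)},\\ 0<\gamma^{(k)}\le T}}\left(\beta^{(k)}-\tfrac12\right) = \int_0^T\log\left|G_k\!\left(\tfrac12+it\right)\right|dt + \int_{1/2}^{a_k}\arg G_k(\sigma+iT)\,d\sigma + O_k(1).
\end{equation*}
Here the integral on the right-hand vertical side tends to $0$ and the tail $\int_{a_k}^{\infty}\arg G_k(\sigma+iT)\,d\sigma$ is $O_k(1)$, both by condition~1 of Lemma~\ref{lem1}; the integral along $t=0$ is $O_k(1)$ since $G_k$ is real on $(\tfrac12,\infty)$ with only $O_k(1)$ sign changes and a pole at $s=1$ (first pushed off the contour); and the finitely many zeros $\rho^{(k)}$ with $\beta^{(k)}<\tfrac12$ (RH, as in the proof of Lemma~\ref{lem1}), together with the standard indentation around any zeros of $\zeta^{(k)}$ on $\sigma=\tfrac12$ — which contribute $0$ to the sum in any case — account for the remaining $O_k(1)$. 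It remains to evaluate $\int_0^T\log|G_k(\tfrac12+it)|\,dt$.

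From $\zeta(s)=F(s)\zeta(1-s)$ one has $G_k(s)=(-1)^k\,2^s(\log 2)^{-k}\,\zeta(s)\,\tfrac{F^{(k)}}{F}(s)\,H_k(s)$, where $H_k(s):=\tfrac{1}{\frac{F^{(k)}}{F}(s)}\tfrac{\zeta^{(k)}}{\zeta}(s)$ is the function from the introduction; since $|F(\tfrac12+it)|=1$,
\begin{equation*}
\log\left|G_k\!\left(\tfrac12+it\right)\right| = \tfrac12\log2 - k\log\log2 + \log\left|F^{(k)}\!\left(\tfrac12+it\right)\right| + \log\left|\zeta\!\left(\tfrac12+it\right)\right| + \log\left|H_k\!\left(\tfrac12+it\right)\right|.
\end{equation*}
For the first term, a sharp form of Stirling's formula gives $\tfrac{F'}{F}(\tfrac12+it)=-\log\tfrac{t}{2\pi}+O(1/t^{2})$ on the critical line (the $O(1/t)$ terms cancel), whence $\tfrac{F^{(k)}}{F}(\tfrac12+it)=(-1)^{k}\bigl(\log\tfrac{t}{2\pi}\bigr)^{k}\bigl(1+O\bigl(\tfrac{1}{t(\log t)^{2}}\bigr)\bigr)$, so that
\begin{equation*}
\int_0^T\log\left|F^{(k)}\!\left(\tfrac12+it\right)\right|dt = k\int_{2\pi}^{T}\log\log\tfrac{t}{2\pi}\,dt + O_k(1) = kT\log\log\tfrac{T}{2\pi} - 2\pi k\,\textnormal{Li}\!\left(\tfrac{T}{2\pi}\right) + O_k(1),
\end{equation*}
which after division by $2\pi$ is exactly the main term of the proposition (together with the constant $\tfrac12\log2-k\log\log2$). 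For the second term, applying Littlewood's lemma to $\zeta(s)$ on $[\tfrac12,\infty)\times[0,T]$ and using RH (every zero of $\zeta$ has real part $\tfrac12$) yields the classical identity $\int_0^T\log|\zeta(\tfrac12+it)|\,dt=-\int_{1/2}^{a_k}\arg\zeta(\sigma+iT)\,d\sigma+O_k(1)$.

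For the third term I would apply Littlewood's lemma to $H_k(s)$ on the rectangle $[a',\tfrac12]\times[0,T]$ and let $a'\to-\infty$. By condition~2 of Lemma~\ref{lem1}, $H_k(s)=1+O(2^{\sigma})$ for $\sigma\le\sigma_k$, so $H_k\to1$ to the left and the integral on the left-hand vertical side tends to $0$; by conditions~3 and~4, $H_k$ is holomorphic and non-vanishing in $\sigma_k\le\sigma<\tfrac12$, $t\ge t_k-1$, where $\arg H_k=\arg\tfrac{\zeta^{(k)}}{\zeta}-\arg\tfrac{F^{(k)}}{F}=O_k(1)$; together with RH this confines the zeros and poles of $H_k$ in $\sigma<\tfrac12$, $t>0$ to a bounded region and shows they are finitely many, and since both sides of Littlewood's identity must converge as $a'\to-\infty$ their numbers coincide, so their contribution is $O_k(1)$. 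Hence $\int_0^T\log|H_k(\tfrac12+it)|\,dt=\int_{-\infty}^{1/2}\arg H_k(\sigma+iT)\,d\sigma+O_k(1)=O_k(1)$. Substituting the three contributions back into the formula for $\int_0^T\log|G_k(\tfrac12+it)|\,dt$, and then into the identity of the first paragraph, yields the proposition.

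The crux, as anticipated in the introduction, is the coordination of logarithmic branches. Littlewood's lemma imposes on $\arg H_k(\sigma+iT)$ the branch obtained by continuous variation around the rectangle, and one must check that along $t=T$ this coincides with $\arg\tfrac{\zeta^{(k)}}{\zeta}(\sigma+iT)-\arg\tfrac{F^{(k)}}{F}(\sigma+iT)$ for the branches provided by conditions~3 and~4 — not merely modulo $2\pi$. I would fix a reference point $s_0$ with $\sigma_0=\sigma_k$ and $t_0$ large, where $H_k$, $\tfrac{\zeta^{(k)}}{\zeta}$ and $\tfrac{F^{(k)}}{F}$ are all explicit, and propagate the equality by continuity of the three arguments throughout $\sigma_k\le\sigma\le\tfrac12$, $t\ge t_k-1$ (this is the computation denoted $I_{15}$ in the introduction); the same discipline is needed to keep the branch of $\log G_k$ used above consistent with the one in the statement, and — as stressed in the introduction — to manipulate $\tfrac{F^{(k)}}{F}$ and $\tfrac{\zeta^{(k)}}{\zeta}$ directly, neither being a logarithmic derivative. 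The only other non-routine input is the sharp $O(1/t^{2})$ in the Stirling expansion above: with merely $O(1/t)$ the first integral would be pinned down only to within $O_k(\log\log T)$.
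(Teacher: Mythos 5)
Your first two steps essentially reproduce the paper's computation in a slightly different dress: Littlewood's lemma applied to $G_k$ on a right half-strip (the paper uses the rectangle $[\tfrac12-\delta,a_k]\times[t_k,T]$ and lets $\delta\to0$, which spares it your indentations at the pole $s=1$ and at real zeros of $\zeta^{(k)}$ on $t=0$, but that difference is minor), and the splitting of $\log|G_k|$ on the line into the constant, $\log\bigl|\tfrac{F^{(k)}}{F}\bigr|$, $\log|\zeta|$ and $\log|H_k|$ pieces corresponds to the paper's $I_{12},I_{13},I_{14},I_{15}$; your Stirling evaluation of the $\tfrac{F^{(k)}}{F}$ integral and the classical RH identity for the $\zeta$ integral match $I_{13}$ and $I_{14}$. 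The genuine gap is your treatment of the third piece, $\int_0^T\log\bigl|H_k(\tfrac12+it)\bigr|\,dt$ with $H_k(s)=\tfrac{1}{\frac{F^{(k)}}{F}(s)}\tfrac{\zeta^{(k)}}{\zeta}(s)$, by Littlewood's lemma on $[a',\tfrac12]\times[0,T]$ with $a'\to-\infty$. Conditions 2--4 of Lemma \ref{lem1} control $H_k$ only in $\sigma\le\sigma_k,\ t\ge2$ and in $\sigma_k\le\sigma<\tfrac12,\ t\ge t_k-1$; they say nothing about the strip $0<t<t_k-1$, $\sigma\le\sigma_k$, nor about the real axis. Since $H_k(s)=\zeta^{(k)}(s)/\bigl(F^{(k)}(s)\,\zeta(1-s)\bigr)$ and $\zeta(1-s)\ne0$ for $\sigma<\tfrac12$ under RH, the poles of $H_k$ in $\sigma<\tfrac12$ are the zeros of $F^{(k)}$, and neither RH nor Levinson--Montgomery (which handles only the zeros of $\zeta^{(k)}$) bounds their number in that low strip; so the assertion that the zeros and poles of $H_k$ in $\sigma<\tfrac12,\ t>0$ are finitely many and confined to a bounded region is unsupported. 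Worse, the bottom edge $t=0$ of your rectangle passes, as $a'\to-\infty$, through infinitely many real zeros of $F^{(k)}$ and of $\zeta^{(k)}$ (condition 2 does not give $H_k\to1$ there, since it needs $t\ge2$), so after the necessary indentations the bottom argument integral can pick up $\pm\pi$ at each such point and need not stay bounded; the appeal to ``both sides of Littlewood's identity must converge, hence the numbers of zeros and poles coincide'' is circular, because the convergence of those boundary integrals is precisely what is in question, and individual Littlewood weights measured from an edge receding to $-\infty$ blow up. Note also that the right edge $\sigma=\tfrac12$ of this rectangle carries $\asymp T\log T$ poles of $H_k$ (the points $\tfrac12+i|\gamma|$ where $\zeta(1-s)=0$), so the contour is riddled with boundary singularities.

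The paper avoids all of this by never applying Littlewood's lemma to $H_k$: using \eqref{eq:fe2} together with conditions 2--4 it fixes a branch of $\log H_k$ that is holomorphic (no zeros or poles at all) in $\sigma<\tfrac12,\ t>t_k-1$, and applies Cauchy's theorem on a trapezoid with bottom edge at height $t_k$ and a slanted left side, so that every portion of the contour lies where either condition 2 ($|H_k-1|\le2^\sigma$, $t\ge2$) or conditions 3--4 (bounded arguments, matched up to a single constant $2n\pi$) apply; this yields $I_{15}=O_k(1)$ directly. To repair your argument you would either have to raise the bottom edge of the $H_k$-contour to height $t_k$ and control the left side exactly as in the paper --- at which point you have reproduced its $I_{15}$ estimate --- or establish genuinely new information about the zeros of $F^{(k)}$ near the negative real axis, which is not available from Lemma \ref{lem1}. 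Your closing remarks about branch coordination (the $2n\pi$ matching along $t=T$) and about the purely imaginary size-$1/t$ term in the Stirling expansion are consistent with what the paper actually does in $I_{15}$ and $I_{13}$.
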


\begin{proof}
The steps of the proof generally follow the proof of Proposition 2.2 of \cite{aka}.
We first take $a_k$, $\sigma_k$, and $t_k$ as in Lemma \ref{lem1} and fix them.
Then, we take $T\geq t_k$ such that $\zeta^{(k)}(\sigma+iT)\neq0$ and $\zeta(\sigma+iT)\neq0$ ($^\forall\sigma\in\mathbb{R}$).
We also let $\delta\in(0,1/2]$ and put $b:=\frac{1}{2}-\delta$.
We consider the rectangle with vertices $b+it_k$,  $a_k+it_k$, $a_k+iT$, and $b+iT$, and then we apply Littlewood's lemma (cf. \cite[pp. 132--133]{tit1}) to $G_k(s)$ there.
By taking the imaginary part, we obtain
\begin{align*}
2\pi\sum_{\substack{\rho^{(k)}=\beta^{(k)}+i\gamma^{(k)},\\ t_k<\gamma^{(k)}\leq T}} (\beta^{(k)}-b) &=
\int_{t_k}^T\log{|G_k(b+it)|}dt - \int_{t_k}^T\log{|G_k(a_k+it)|}dt\\
&\quad- \int_b^{a_k}\arg{G_k(\sigma+it_k)}d\sigma + \int_b^{a_k}\arg{G_k(\sigma+iT)}d\sigma\\
&=: I_1 + I_2 + I_3 + \int_b^{a_k}\arg{G_k(\sigma+iT)}d\sigma \numberthis \label{eq:lit}
\end{align*}
where the sum is counted with multiplicity.
By the same reasoning as in \cite[p. 2246]{aka}, we have
\begin{equation*}
I_2 = O_{a_k}(1),\quad I_3 = O_{a_k,t_k}(1).
\end{equation*}

\vspace{2mm}
Now we only need to estimate $I_1$.
From the functional equation for $\zeta(s)$ (see \eqref{eq:fe}), we can deduce that
\begin{align*}
\zeta^{(k)}(s) &= F^{(k)}(s)\zeta(1-s)\left(1 - \sum_{j=1}^k {{k}\choose{j}} (-1)^{j-1} \frac{1}{\frac{F^{(k)}}{F^{(k-j)}}(s)}\frac{\zeta^{(j)}}{\zeta}(1-s)\right)\\
&= F(s)\frac{F^{(k)}}{F}(s)\zeta(1-s)\left(1 - \sum_{j=1}^k {{k}\choose{j}} (-1)^{j-1} \frac{1}{\frac{F^{(k)}}{F^{(k-j)}}(s)}\frac{\zeta^{(j)}}{\zeta}(1-s)\right).
\end{align*}
Hence,
\begin{align*}
I_1 &= \int_{t_k}^T\log{|G_k(b+it)|}dt = \int_{t_k}^T\log{\frac{2^b}{(\log{2})^k}|\zeta^{(k)}(b+it)|}dt\\
&= \int_{t_k}^T\log{\frac{2^b}{(\log{2})^k}}dt +  \int_{t_k}^T\log{|\zeta^{(k)}(b+it)|}dt\\
&= (b\log{2}-k\log{\log{2}})(T-t_k) + \int_{t_k}^T\log{|F(b+it)|}dt + \int_{t_k}^T\log{\left|\frac{F^{(k)}}{F}(b+it)\right|}dt\quad\\
&\quad+ \int_{t_k}^T\log{|\zeta(1-b-it)|}dt\\
&\quad+ \int_{t_k}^T\log{\left|1 - \sum_{j=1}^k {{k}\choose{j}} (-1)^{j-1} \frac{1}{\frac{F^{(k)}}{F^{(k-j)}}(b+it)}\frac{\zeta^{(j)}}{\zeta}(1-b-it)\right|}dt\\
&=: ((b\log{2}-k\log{\log{2}})T + O_{t_k}(1)) + I_{12} + I_{13} + I_{14} + I_{15}. \numberthis \label{eq:i1}
\end{align*}

\vspace{2mm}
As shown in \cite[pp. 2247--2249]{aka},
\begin{align*}
I_{12} = \left(\frac{1}{2}-b\right)\left(T\log{\frac{T}{2\pi}}-T\right) + O_{t_k}(1),
\end{align*}
\begin{equation*}
I_{14} = -\int_{1-b}^{a_k}\arg{\zeta(\sigma+iT)}d\sigma + O_{a_k,t_k}(1).
\end{equation*}

\vspace{2mm}
Below we estimate $I_{13}$ and $I_{15}$.

\vspace{2mm}
We begin with the estimation of $I_{13}$. We consider for $0<\sigma<\frac{1}{2}$ and $t\geq100$.
We first show that
\begin{equation} \label{eq:Fk2}
F^{(k)}(s) = F(s)(f'(s))^k\left(1 + O\left(\frac{e^{-t}}{|\log{s}|^2}\right) + O\left(\frac{1}{|s||\log{s}|^2}\right)\right)
\end{equation}
holds in the region $\sigma<1,\, t\geq100$. 
It is obvious that the above error estimate is more precise than that in \eqref{eq:Fj}. The proof is similar to the proof of condition 2 of Lemma \ref{lem1}.
We begin by taking the logarithmic branch of $\log{\left(\sin{\frac{\pi s}{2}}\right)}$ as
\begin{equation} \label{eq:logsin}
\log{\left(\sin{\frac{\pi s}{2}}\right)} = -\frac{\pi is}{2}-\log{2}+\frac{\pi i}{2}-\sum_{n=1}^\infty\frac{e^{\pi ins}}{n}
\end{equation}
in the region $0<\sigma<1,\, t\geq2$ and analytically continue it to the region $\sigma<1,\,$ $t\geq2$.
Next, we apply Stirling's formula to $\Gamma(1-s)$ in the region $-\frac{\pi}{2}<\arg{(1-s)}<\frac{\pi}{2}$. Substituting these into $F(s)$, we obtain
\begin{align*}
F(s) = \exp\left(\frac{\pi i}{4} - 1 + \left(\frac{1}{2}-s\right)\log{\frac{(1-s)i}{2\pi}} + s + O(e^{-t}) + O\left(\frac{1}{|s|}\right)\right)
\end{align*}
for $\sigma<1$ and $t\geq100$, where the term $O(e^{-t})$ comes from the term $\sum_{n=1}^\infty\frac{e^{\pi ins}}{n}$ in \eqref{eq:logsin} and the term $O\left(\frac{1}{|s|}\right)$ originates from the Stirling's formula.

We now write $f(s):=\left(\frac{1}{2}-s\right)\log{\frac{(1-s)i}{2\pi}}+s+O(e^{-t})+O\left(\frac{1}{|s|}\right)$ and differentiate it with respect to $s$ to obtain
\begin{align*}
f'(s) = -\log{\frac{(1-s)i}{2\pi}} + \frac{1}{2(1-s)} + O(e^{-t}) + O\left(\frac{1}{|s|^2}\right)
\end{align*}
and
\begin{equation*}
f^{(j)}(s) = O(e^{-t}) + O\left(\frac{1}{|s|^{j-1}}\right)
\end{equation*}
for $j\geq2$.
\eqref{eq:Fk2} immediately follows.
As a consequence to \eqref{eq:Fk2},
\begin{align*}
\frac{F^{(k)}}{F}(b+it)
&= \left(-\log{\frac{t+(1-b)i}{2\pi}} + \frac{1}{2(1-b-it)} + O\left(\frac{1}{t^2}\right)\right)^k \\
&\quad\times \left(1+ O\left(\frac{1}{t(\log{t})^2}\right)\right) \\
&= \left(-\log{\frac{t}{2\pi}} + \frac{t^2-2(1-b)((1-b)^2+t^2)}{2((1-b)^2+t^2)t}i + O\left(\frac{1}{t^2}\right)\right)^k \\
&\quad\times \left(1+ O\left(\frac{1}{t(\log{t})^2}\right)\right).
\end{align*}
This gives us
\begin{align*}
\log{\frac{F^{(k)}}{F}(b+it)} &= k\log{\log{\frac{t}{2\pi}}} + k\log{(-1)} \\
&\quad+ k\log\left(1-\frac{t^2-2(1-b)((1-b)^2+t^2)}{2((1-b)^2+t^2)t\log{\frac{t}{2\pi}}}i + O\left(\frac{1}{t^2\log{t}}\right)\right) \\
&\quad+  O\left(\frac{1}{t(\log{t})^2}\right) \\
&= k\log{\log{\frac{t}{2\pi}}} + k\log{(-1)} - k\frac{t^2-2(1-b)((1-b)^2+t^2)}{2((1-b)^2+t^2)t\log{\frac{t}{2\pi}}}i \\
&\quad+  O\left(\frac{1}{t(\log{t})^2}\right).
\end{align*}
Consequently we have
\begin{align*}
\text{Re}\left(\log{\frac{F^{(k)}}{F}(b+it)}\right) = k\log{\log{\frac{t}{2\pi}}} + O\left(\frac{1}{t(\log{t})^2}\right).
\end{align*}
Hence,
\begin{equation*}
\begin{aligned}
I_{13} &= \int_{t_k}^T\log{\left|\frac{F^{(k)}}{F}(b+it)\right|}dt =  \int_{t_k}^T\text{Re}\left(\log{\frac{F^{(k)}}{F}(b+it)}\right)dt\\
&= k\int_{t_k}^T \log{\log{\frac{t}{2\pi}}}dt + O\left(\int_{t_k}^T\frac{dt}{t(\log{t})^2}\right)\\
&= kT\log{\log{\frac{T}{2\pi}}} - 2\pi k\operatorname{Li}\left(\frac{T}{2\pi}\right) + O_{t_k}(1).
\end{aligned}
\end{equation*}

\vspace{2mm}
Finally, we estimate $I_{15}$.
Again from the functional equation for $\zeta(s)$ (see \eqref{eq:fe}), we have
\begin{align*}
\zeta^{(k)}(s) &= F^{(k)}(s)\zeta(1-s)\left(1 - \sum_{j=1}^k {{k}\choose{j}} (-1)^{j-1} \frac{1}{\frac{F^{(k)}}{F^{(k-j)}}(s)}\frac{\zeta^{(j)}}{\zeta}(1-s)\right)\\
&= \frac{F^{(k)}}{F}(s)\zeta(s)\left(1 - \sum_{j=1}^k {{k}\choose{j}} (-1)^{j-1} \frac{1}{\frac{F^{(k)}}{F^{(k-j)}}(s)}\frac{\zeta^{(j)}}{\zeta}(1-s)\right)
\end{align*}
which gives us
\begin{equation} \label{eq:fe2}
\frac{1}{\frac{F^{(k)}}{F}(s)}\frac{\zeta^{(k)}}{\zeta}(s) = 1 - \sum_{j=1}^k {{k}\choose{j}} (-1)^{j-1} \frac{1}{\frac{F^{(k)}}{F^{(k-j)}}(s)}\frac{\zeta^{(j)}}{\zeta}(1-s).
\end{equation}

It follows from condition 2 of Lemma \ref{lem1} that the right hand side of \eqref{eq:fe2} is holomorphic and has no zeros in the region defined by $\sigma\leq\sigma_k$ and $t\geq2$.
Moreover from conditions 3 and 4 of Lemma \ref{lem1}, the left hand side of \eqref{eq:fe2} is holomorphic and has no zeros in the region defined by $\sigma_k\leq\sigma<\frac{1}{2}$ and $t\geq t_k-1$.
Thus, we can determine $\log{\left(1 - \sum_{j=1}^k {{k}\choose{j}} (-1)^{j-1} \frac{1}{\frac{F^{(k)}}{F^{(k-j)}}(s)}\frac{\zeta^{(j)}}{\zeta}(1-s)\right)}$ so that it tends to $0$ as $\sigma\rightarrow-\infty$ which follows from condition 2 of Lemma \ref{lem1}, and is holomorphic in the region $\sigma<\frac{1}{2},\, t>t_k-1$.

Now we consider the trapezoid $C$ with vertices $b+it_k$, $b+iT$, $-T+iT$, and $-t_k+it_k$ (as in \cite[p. 2247]{aka}). Then by Cauchy's integral theorem,
\begin{equation} \label{eq:i15}
\int_C \log{\left(1 -  \sum_{j=1}^k {{k}\choose{j}} (-1)^{j-1} \frac{1}{\frac{F^{(k)}}{F^{(k-j)}}(s)}\frac{\zeta^{(j)}}{\zeta}(1-s)\right)}ds =0.
\end{equation}
By using condition 2 of Lemma \ref{lem1}, we can also show that (cf. \cite[p. 2248]{aka})
\begin{align*}
&\left(\int_{\sigma_k+iT}^{-T+iT} + \int_{-T+iT}^{-t_k+it_k} + \int_{-t_k+it_k}^{\sigma_k+it_k}\right) \\
&\quad\quad\quad\quad\quad\quad\quad
\log{\left(1 - \sum_{j=1}^k {{k}\choose{j}} (-1)^{j-1} \frac{1}{\frac{F^{(k)}}{F^{(k-j)}}(s)}\frac{\zeta^{(j)}}{\zeta}(1-s)\right)}ds = O(1).
\end{align*}
Next we estimate the integral from $\sigma_k+it_k$ to $b+it_k$ trivially and we obtain
\begin{align*}
\int_{\sigma_k+it_k}^{b+it_k} \log{\left(1 - \sum_{j=1}^k {{k}\choose{j}} (-1)^{j-1} \frac{1}{\frac{F^{(k)}}{F^{(k-j)}}(s)}\frac{\zeta^{(j)}}{\zeta}(1-s)\right)} ds = O_{t_k}(1).
\end{align*}

Substituting the above two equations into \eqref{eq:i15} and taking the imaginary part, we obtain
\begin{align*}
I_{15} &=  \int_{t_k}^T \log{\left|1 - \sum_{j=1}^k {{k}\choose{j}} (-1)^{j-1} \frac{1}{\frac{F^{(k)}}{F^{(k-j)}}(b+it)}\frac{\zeta^{(j)}}{\zeta}(1-b-it)\right|} dt \\
&= \int_{\sigma_k}^b \arg{\left(1 - \sum_{j=1}^k {{k}\choose{j}} (-1)^{j-1} \frac{1}{\frac{F^{(k)}}{F^{(k-j)}}(\sigma+iT)}\frac{\zeta^{(j)}}{\zeta}(1-\sigma-iT)\right)} d\sigma + O_{t_k}(1) \\
&\overset{\eqref{eq:fe2}}= \int_{\sigma_k}^b \arg{\left(\frac{1}{\frac{F^{(k)}}{F}(\sigma+iT)}\frac{\zeta^{(k)}}{\zeta}(\sigma+iT)\right)}d\sigma + O_{t_k}(1).
\end{align*}

Now we determine the logarithmic branch of $\log{\frac{F^{(k)}}{F}(s)}$ and $\log{\frac{\zeta^{(k)}}{\zeta}(s)}$ in the region $\sigma_k\leq\sigma<\frac{1}{2},\, t\geq t_k-1$ as in conditions 3 and 4, respectively, of Lemma \ref{lem1}.
Note that
\begin{align*}
\log{\left|\frac{1}{\frac{F^{(k)}}{F}(s)}\frac{\zeta^{(k)}}{\zeta}(s)\right|} = -\log{\left|\frac{F^{(k)}}{F}(s)\right|} + \log{\left|\frac{\zeta^{(k)}}{\zeta}(s)\right|}
\end{align*}
holds in the region $\sigma_k\leq\sigma<\frac{1}{2},\, t\geq t_k-1$.
Furthermore, since $\log{\left(\frac{1}{\frac{F^{(k)}}{F}(s)}\frac{\zeta^{(k)}}{\zeta}(s)\right)}$ ($= \log{\left(1 - \sum_{j=1}^k {{k}\choose{j}} (-1)^{j-1} \frac{1}{\frac{F^{(k)}}{F^{(k-j)}}(s)}\frac{\zeta^{(j)}}{\zeta}(1-s)\right)}$), $\log{\frac{F^{(k)}}{F}(s)}$, and $\log{\frac{\zeta^{(k)}}{\zeta}(s)}$ are holomorphic in this region, we know that $\arg{\left(\frac{1}{\frac{F^{(k)}}{F}(s)}\frac{\zeta^{(k)}}{\zeta}(s)\right)}$, $\arg{\frac{F^{(k)}}{F}(s)}$, and $\arg{\frac{\zeta^{(k)}}{\zeta}(s)}$ are continuous there. Since the region $\sigma_k\leq\sigma<\frac{1}{2},\, t\geq t_k-1$ is connected, there exists a constant $n\in\mathbb{Z}$ such that
\begin{equation*}
\arg{\left(\frac{1}{\frac{F^{(k)}}{F}(s)}\frac{\zeta^{(k)}}{\zeta}(s)\right)} = -\arg{\frac{F^{(k)}}{F}(s)} + \arg{\frac{\zeta^{(k)}}{\zeta}(s)} + 2n\pi
\end{equation*}
holds in $\sigma_k\leq\sigma<\frac{1}{2},\, t\geq t_k-1$.

From this choice of logarithmic branch, we have
\begin{equation} \label{eq:arg}
\frac{(3k-\beta_k)}{6}\pi+2n\pi < \arg{\left(\frac{1}{\frac{F^{(k)}}{F}(\sigma+iT)}\frac{\zeta^{(k)}}{\zeta}(\sigma+iT)\right)} < \frac{(9k-\alpha_k)}{6}\pi + 2n\pi
\end{equation}
for $\sigma_k\leq\sigma<\frac{1}{2}$.
Here, $\alpha_k$ and $\beta_k$  are the constants given in Lemma \ref{lem1}, that is,
\begin{align*}
(\alpha_k,\beta_k) =
\begin{cases}
(5,7), &\quad \text{if}\,\, k \,\,\text{is odd},\\
(-1,1), &\quad \text{if}\,\, k \,\,\text{is even}.
\end{cases}
\end{align*}
Since $n$ does not depend on $s$, $n=O_k(1)$. Therefore
\begin{align*}
\arg{\left(\frac{1}{\frac{F^{(k)}}{F}(\sigma+iT)}\frac{\zeta^{(k)}}{\zeta}(\sigma+iT)\right)} = O_k(1).
\end{align*}
From this, we can easily show that
\begin{equation*}
I_{15} = O_k(1),
\end{equation*}
for $\sigma_k$ and $t_k$ are fixed constants that depend only on $k$.

\vspace{2mm}
Inserting the estimates of $I_{12}$, $I_{13}$, $I_{14}$, and $I_{15}$ into \eqref{eq:i1}, we obtain
\begin{align*}
I_1 = (b\log{2}-k\log{\log{2}})T + \left(\frac{1}{2}-b\right)\left(T\log{\frac{T}{2\pi}}-T\right) &+ kT\log{\log{\frac{T}{2\pi}}} - 2k\pi\operatorname{Li}\left(\frac{T}{2\pi}\right)\\
&- \int_{1-b}^{a_k}\arg{\zeta(\sigma+iT)}d\sigma + O_k(1),
\end{align*}
since $a_k$ and $t_k$ are fixed constants that depend only on $k$.

\vspace{2mm}
To finalize the proof of Proposition \ref{prop2}, we insert the estimates of $I_1$, $I_2$, and $I_3$ into \eqref{eq:lit} to obtain
\begin{align*}
2\pi\sum_{\substack{\rho^{(k)}=\beta^{(k)}+i\gamma^{(k)},\\ 0<\gamma^{(k)}\leq T}} (\beta^{(k)}-b) &=
kT\log{\log{\frac{T}{2\pi}}} + (b\log{2}-k\log{\log{2}})T - 2k\pi\operatorname{Li}\left(\frac{T}{2\pi}\right)\\
&\quad+\left(\frac{1}{2}-b\right)\left(T\log{\frac{T}{2\pi}}-T\right) - \int_{1-b}^{a_k}\arg{\zeta(\sigma+iT)}d\sigma \\
&\quad+ \int_b^{a_k}\arg{G_k(\sigma+iT)}d\sigma + O_k(1).
\end{align*}
Taking the limit $\delta\rightarrow0$, we have $b\rightarrow\frac{1}{2}$, thus
\begin{align*}
\sum_{\substack{\rho^{(k)}=\beta^{(k)}+i\gamma^{(k)},\\ 0<\gamma^{(k)}\leq T}} \left(\beta^{(k)}-\frac{1}{2}\right) &= \frac{kT}{2\pi}\log{\log{\frac{T}{2\pi}}} + \frac{1}{2\pi}\left(\frac{1}{2}\log{2}-k\log{\log{2}}\right)T - k\operatorname{Li}\left(\frac{T}{2\pi}\right)\\
&\quad+ \frac{1}{2\pi}\int_{\frac{1}{2}}^{a_k}(-\arg{\zeta(\sigma+iT)}+\arg{G_k(\sigma+iT)})d\sigma + O_k(1).
\end{align*}
\end{proof}

\begin{rmk}
The proof of Proposition \ref{prop2} (and thus of Proposition 2.2 of \cite{aka}) actually, more or less, follows the proof of Theorem 10 given in \cite[Section 3]{lev}.
One obvious difference is that we did not estimate the fourth integral in \eqref{eq:lit} while Levinson and Montgomery estimated the corresponding integral (the fourth integral in (3.1) of \cite[Section 3]{lev}) as $O(\log{T})$. As in Akatsuka's \cite{aka} did, it turns out that this term contributes to the integral appearing in Proposition \ref{prop2} which will be estimated in the following few lemmas. This integral will contribute to the error term in Theorem \ref{cha1} and in the proofs of the following lemmas, we shall use the assumption of RH to reduce the upper bound of this integral.
\end{rmk}

\begin{rmk}
In contrast to the proof of Theorem 10 of \cite{lev}, in this paper (and in \cite{aka} as well), we describe some important estimates, such as those on $G_k(s)$, $\frac{F^{(k)}}{F}(s)$, and $\frac{\zeta^{(k)}}{\zeta}(s)$, which are related to the existence of fixed constants $a_k$, $\sigma_k$, and $t_k$ in Lemma \ref{lem1} for the sake of clarity. Furthermore, we also explicitly state Proposition \ref{prop2} since it clearly points out the main terms of Theorem \ref{cha1} and thus this gives the readers clear information of the term that in the current research contributes to the error term which is to be possibly improved in future research.
\end{rmk}

To complete the proof of Theorem \ref{cha1}, we need to estimate
\begin{align*}
\int_{\frac{1}{2}}^{a_k}(-\arg{\zeta(\sigma+iT)}+\arg{G_k(\sigma+iT)})d\sigma
\end{align*}
in Proposition \ref{prop2}.
For that purpose, similar to the method taken in \cite{aka}, below we give two bounds for $-\arg{\zeta(\sigma+iT)}+\arg{G_k(\sigma+iT)}$.
We write
\begin{align*}
-\arg{\zeta(\sigma+iT)}+\arg{G_k(\sigma+iT)} = \arg{\frac{G_k}{\zeta}(\sigma+iT)}
\end{align*}
where the argument on the right hand side is taken so that
$\log{\frac{G_k}{\zeta}(s)}$ tends to $0$ as $\sigma\rightarrow\infty$ and is holomorphic in $\mathbb{C}\backslash\{z+\lambda \,|\, (\zeta^{(k)}/\zeta)(z)=0 \,\,\text{or}\,\, \infty, \, \lambda\leq0\}$.

\vspace{3mm}
\begin{lem} \label{lem3}
Assume RH and let $T\geq t_k$.
Then for any $\epsilon_0>0$ satisfying $\epsilon_0<\frac{1}{2\log{T}}$
(since $T\geq t_k\geq100$, $\epsilon_0<\frac{1}{8}$),
we have for $\frac{1}{2}+\epsilon_0<\sigma\leq a_k$,
\begin{align*}
\arg{\frac{G_k}{\zeta}(\sigma+iT)} = O_{a_k,t_k} \left(\frac{\log{\frac{\log{T}}{\epsilon_0}}}{\sigma-\frac{1}{2}-\epsilon_0}\right).
\end{align*}
\end{lem}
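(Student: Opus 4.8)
The plan is to mimic the classical argument for bounding $\arg\zeta(\sigma+iT)$ by counting sign changes of the real part of an analytic function along a horizontal segment, but applied to $\frac{G_k}{\zeta}(s) = (-1)^k\frac{2^s}{(\log 2)^k}\frac{\zeta^{(k)}}{\zeta}(s)$. First I would observe that by the definition of the logarithmic branch (normalized so that $\log\frac{G_k}{\zeta}(s)\to 0$ as $\sigma\to\infty$), for a fixed $T$ the quantity $\arg\frac{G_k}{\zeta}(\sigma+iT)$ equals the total variation of $\mathrm{Im}\log\frac{G_k}{\zeta}$ along the ray from $\sigma+iT$ out to $+\infty$; this is controlled by the number of zeros of $\mathrm{Re}\,\frac{G_k}{\zeta}(x+iT)$ for $x \geq \sigma$, plus a bounded amount coming from the intervals where the real part does not vanish. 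So the task reduces to bounding the number of zeros of the real part of the analytic function $g(z) := \frac{G_k}{\zeta}(z+iT)$ on the segment $\mathrm{Re}\,z \in [\,\frac12+\epsilon_0,\, a_k\,]$, and I would handle this with Jensen's inequality / the standard lemma relating the number of zeros in a disc to $\log$ of the maximum modulus (cf. Titchmarsh, or the way it is used in \cite{aka} for the $k=1$ case).

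Concretely, I would set up a disc (or a pair of nested discs) centered at a point like $a_k + iT$ — chosen well inside the half-plane $\sigma > \frac12$ so that $\frac{\zeta^{(k)}}{\zeta}(s)$ is holomorphic and nonvanishing there (using condition 1 of Lemma \ref{lem1}, $|G_k(s)-1|\leq\frac12(2/3)^{\sigma/2}$ for $\sigma\geq a_k$, which also gives $g$ bounded above and below near the center) — with radius large enough that the larger disc covers the segment $[\frac12+\epsilon_0 + iT,\, a_k+iT]$. The number of zeros of $\frac{G_k}{\zeta}$ (equivalently of $\frac{\zeta^{(k)}}{\zeta}$) inside such a disc, and hence the number of sign changes of the real part on the segment, is $\ll \log\frac{M}{m}$ where $M$ is the maximum of $|g|$ on a slightly larger disc and $m$ a lower bound near the center. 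The center contribution $m$ is $\asymp 1$ by condition 1. For $M$ I would use the standard RH-conditional bound for $\frac{\zeta^{(k)}}{\zeta}(s)$ in the strip $\frac12+\epsilon_0 \leq \sigma$, which (because the current best estimate relies on Cauchy's integral formula applied to $\log\zeta$, losing a factor tied to the distance to the line $\sigma=\frac12$) is of size $O\!\big(\frac{\log T}{\epsilon_0}\big)$ roughly — this is precisely the reason the $\epsilon_0$ buffer is introduced, as flagged in the introduction. Feeding this into $\log\frac{M}{m}$ produces the $\log\frac{\log T}{\epsilon_0}$ in the numerator.

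The $\frac{1}{\sigma - \frac12 - \epsilon_0}$ factor in the denominator comes from the refinement in which, instead of one disc, one uses the Borel–Carathéodory / Hadamard three-circles type estimate: to bound $\arg g$ at a point at horizontal distance $d = \sigma - \frac12 - \epsilon_0$ from the boundary of the admissible region, one balances the radii of the nested discs so that the count of zeros relevant to that point is weighted by $1/d$. Equivalently one integrates the zero-counting bound against a measure that decays like the reciprocal distance, exactly as in Akatsuka's treatment of the corresponding lemma for $\zeta'$. I would carry the constants through keeping track that everything outside the explicit $\epsilon_0$- and $T$-dependence is absorbed into $O_{a_k,t_k}(1)$, since $a_k$, $\sigma_k$, $t_k$ are fixed once $k$ is fixed.

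The main obstacle I anticipate is obtaining a clean upper bound for $\big|\frac{\zeta^{(k)}}{\zeta}(s)\big|$ in the region $\frac12+\epsilon_0 \leq \sigma \leq a_k$, $t$ near $T$, that is uniform in $\epsilon_0$ and explicit in its $T$- and $\epsilon_0$-dependence; unlike the $k=1$ case where $\frac{\zeta'}{\zeta}$ is directly a logarithmic derivative with a Hadamard-product expansion, here one must differentiate $\zeta$ $k$ times and divide, so the bound is most naturally gotten by applying Cauchy's integral formula to $\log\zeta(s)$ on a small circle of radius $\asymp \epsilon_0$ — which forces the small-circle radius to shrink with $\epsilon_0$ and is exactly why the hypothesis $\epsilon_0 < \frac{1}{2\log T}$ appears, so that the circle stays inside the zero-free (under RH) half-plane with room to spare. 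Once that modulus bound is in hand with the shape $O_{a_k,t_k}\!\big(\frac{\log T}{\epsilon_0}\big)$, the rest is the routine zero-counting-plus-three-circles machinery.
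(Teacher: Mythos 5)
Your plan is essentially the paper's proof: bound $\left|\arg\frac{G_k}{\zeta}(\sigma+iT)\right|$ by $\pi$ times (one plus) the number of vanishings of $\mathrm{Re}\,\frac{G_k}{\zeta}(u+iT)$ on $[\sigma,c_k]$ for a far-right center $c_k$ where condition 1 of Lemma \ref{lem1} keeps $\frac{G_k}{\zeta}$ near $1$; count these vanishings by Jensen's theorem on nested discs whose radii differ by $\asymp\sigma-\frac12-\epsilon_0$ (this is where the denominator comes from); and bound the maximum modulus using an RH-conditional estimate for $\frac{\zeta^{(k)}}{\zeta}$ obtained via Cauchy's formula on circles of radius $\epsilon_0$, which is exactly why the hypothesis $\epsilon_0<\frac{1}{2\log T}$ and the $\log\frac{\log T}{\epsilon_0}$ appear. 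Two points need tightening. First, your step ``the number of zeros of $\frac{G_k}{\zeta}$ inside such a disc, and hence the number of sign changes of the real part on the segment'' is not a valid implication as written: $\mathrm{Re}\,g$ can vanish where $g$ does not, so one must instead count zeros of the reflected analytic function $H_k(z)=\frac12\left(\frac{G_k}{\zeta}(z+iT)+\frac{G_k}{\zeta}(z-iT)\right)$, which equals $\mathrm{Re}\,\frac{G_k}{\zeta}(u+iT)$ for real $z=u$; this is what the paper does, and your modulus bounds transfer to $H_k$ at once, so the fix is routine. Second, the correct bound in the strip is $\frac{\zeta^{(k)}}{\zeta}(s)=O\!\left(\frac{(\log T)^k}{\epsilon_0^{\,k}}\right)$ — the paper gets it by induction, applying Cauchy's formula to $\frac{\zeta^{(n)}}{\zeta}$ on $\epsilon_0$-circles starting from $\frac{\zeta'}{\zeta}(s)=O\!\left(\frac{\log T}{\sigma-\frac12}\right)$, while your variant through $\log\zeta$ and Fa\`a di Bruno would give the same shape — not $O\!\left(\frac{\log T}{\epsilon_0}\right)$; this is harmless since only its logarithm enters Jensen's inequality, so the extra factor $k$ is absorbed into the implied constant.
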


\begin{proof}
To begin with, we note that $\frac{G_k}{\zeta}(s)$ is uniformly convergent to $1$ as $\sigma\rightarrow\infty$ for $t\in\mathbb{R}$, so we can take a number $c_k\in\mathbb{R}$ satisfying $a_k+1\leq c_k\leq\frac{t_k}{2}$ and $\frac{1}{2}\leq\text{Re}\left(\frac{G_k}{\zeta}(s)\right)\leq\frac{3}{2}\,\, (\text{when}\,\, \sigma\geq c_k)$. In fact, we can check that taking $c_k=10+k^2$ is enough.

\vspace{2mm}
The proof also proceeds similarly to the proof of Lemma 2.3 of \cite{aka}.
We let $\sigma\in(1/2+\epsilon_0,a_k]$ and let
$q_{G_k/\zeta}=q_{G_k/\zeta}(\sigma,T)$ denote the number of times $\text{Re}\left(\frac{G_k}{\zeta}(u+iT)\right)$ vanishes in $u\in[\sigma,c_k]$.
Then, $\left|\arg{\frac{G_k}{\zeta}(\sigma+iT)}\right|\leq\left(q_{G_k/\zeta}+1\right)\pi$.

\vspace{2mm}
Now we estimate $q_{G_k/\zeta}$.
For that purpose, we set
\begin{align*}
H_k(z)=H_{k_T}(z):=\frac{\frac{G_k}{\zeta}(z+iT)+\frac{G_k}{\zeta}(z-iT)}{2} \quad (z\in\mathbb{C})
\end{align*}
and $n_{H_k}(r):=\sharp\{z\in\mathbb{C} \,|\, H_k(z)=0, |z-c_k|\leq r\}$.
Then, we have $q_{G_k/\zeta}\leq n_{H_k}(c_k-\sigma)$ for $\frac{1}{2}+\epsilon_0<\sigma\leq a_k$.
For each $\sigma\in(1/2+\epsilon_0,a_k]$, we take $\epsilon=\epsilon_{\sigma,T}$ satisfying $0<\epsilon<\sigma-\frac{1}{2}-\epsilon_0$, then $H_k(z)$ is holomorphic in the region $\{z\in\mathbb{C} \,|\, |z-c_k| \leq c_k-\sigma+\epsilon\}$.
As in \cite[p. 2250]{aka}, by using Jensen's theorem (cf. \cite[pp. 125--126]{tit1}), we can show that
\begin{align*}
n_{H_k}(c_k-\sigma) &\leq \frac{1}{C_1\epsilon}\int_0^{c_k-\sigma+\epsilon}\frac{n_{H_k}(r)}{r}dr\\
&= \frac{1}{C_1\epsilon}\frac{1}{2\pi}\int_0^{2\pi}\log{|H_k(c_k+(c_k-\sigma+\epsilon)e^{i\theta})|}d\theta - \frac{1}{C_1\epsilon}\log{|H_k(c_k)|}
\end{align*}
for some constant $C_1>0$,
which by our choice of $c_k$ gives us
\begin{equation} \label{eq:nhk}
n_{H_k}(c_k-\sigma) \leq \frac{1}{C_1\epsilon}\frac{1}{2\pi}\int_0^{2\pi}\log{|H_k(c_k+(c_k-\sigma+\epsilon)e^{i\theta})|}d\theta + \frac{1}{\epsilon}O_{a_k,t_k}(1).
\end{equation}

\vspace{2mm}
Finally we estimate $\frac{1}{2\pi}\int_0^{2\pi}\log{|H_k(c_k+(c_k-\sigma+\epsilon)e^{i\theta})|}d\theta$.
From \cite[Theorems 9.2 and 9.6(A)]{tit2} (similar to what stated in \cite[p. 2250]{aka}),
\begin{align*}
\frac{\zeta'}{\zeta}(\sigma\pm it) = O\left(\frac{\log{T}}{\sigma-\frac{1}{2}}\right)
\end{align*}
holds for $\frac{1}{2}<\sigma\leq 2c_k$ and $\frac{T}{2}\leq t\leq 2T$.
Thus, for $\frac{1}{2}+\epsilon_0<\sigma\leq 2c_k$ and $\frac{T}{2}\leq t\leq 2T$, we have
\begin{equation} \label{eq:k1}
\frac{\zeta'}{\zeta}(\sigma\pm it) = O\left(\frac{\log{T}}{\epsilon_0}\right).
\end{equation}

\vspace{2mm}
With this estimate, we show that
\begin{align*}
\frac{\zeta^{(k)}}{\zeta}(s) = O\left(\frac{(\log{T})^k}{\epsilon_0^k}\right)
\end{align*}
holds for $\frac{1}{2}+\epsilon_0<\sigma<2c_k$ and $\frac{T}{2}\leq |t|\leq 2T$.
We use induction on $k$ in the equation.
For $k=1$, $\frac{\zeta'}{\zeta}(\sigma\pm it) = O\left(\frac{\log{T}}{\epsilon_0}\right)$ follows from \eqref{eq:k1}.
Suppose that $\frac{\zeta^{(n)}}{\zeta}(s) = O\left(\frac{(\log{T})^n}{\epsilon_0^n}\right)$ hold in the region $\frac{1}{2}+\epsilon_0<\sigma<2c_k,\, \frac{T}{2}\leq |t|\leq 2T$ for a positive integer $n$, then
\begin{equation} \label{eq:prodr}
\left(\frac{\zeta^{(n)}}{\zeta}(s)\right)' =  \frac{1}{2\pi i}\int_{|z-s|=\epsilon_0} \frac{\frac{\zeta^{(n)}}{\zeta}(z)}{(z-s)^2}dz = O\left(\frac{(\log{T})^n}{\epsilon_0^{n+1}}\right).
\end{equation}
Meanwhile,
\begin{align*}
\left(\frac{\zeta^{(n)}}{\zeta}(s)\right)' = \frac{\zeta^{(n+1)}}{\zeta}(s) - \frac{\zeta^{(n)}}{\zeta}(s)\frac{\zeta'}{\zeta}(s)
\end{align*}
holds in the region.

Therefore, by \eqref{eq:prodr} and by the induction hypothesis,
\begin{align*}
\frac{\zeta^{(n+1)}}{\zeta}(s) &= \left(\frac{\zeta^{(n)}}{\zeta}(s)\right)' + \frac{\zeta^{(n)}}{\zeta}(s)\frac{\zeta'}{\zeta}(s) = O\left(\frac{(\log{T})^n}{\epsilon_0^{n+1}}\right) + O\left(\frac{(\log{T})^{n+1}}{\epsilon_0^{n+1}}\right)\\
&= O\left(\frac{(\log{T})^{n+1}}{\epsilon_0^{n+1}}\right)
\end{align*}
holds for $\frac{1}{2}<\sigma\leq 2c_k$ and $\frac{T}{2}\leq |t|\leq 2T$.
Hence, by induction, we find that
\begin{align*}
\frac{\zeta^{(k)}}{\zeta}(s) = O\left(\frac{(\log{T})^k}{\epsilon_0^k}\right)
\end{align*}
holds in the region defined by $\frac{1}{2}+\epsilon_0<\sigma<2c_k$ and $\frac{T}{2}\leq |t|\leq 2T$.
This immediately gives us
\begin{align*}
|H_k(c_k+(c_k-\sigma+\epsilon)e^{i\theta})| \ll_{a_k,t_k} \frac{(\log{T})^k}{\epsilon_0^k},
\end{align*}
and so
\begin{equation*}
|H_k(c_k+(c_k-\sigma+\epsilon)e^{i\theta})| \leq C_2(a_k,t_k) \frac{(\log{T})^k}{\epsilon_0^k}
\end{equation*}
for some constant $C_2>0$ which depends only on $a_k$ and $t_k$.
Thus,
\begin{align*}
\frac{1}{2\pi}\int_0^{2\pi}\log{|H_k(c_k+(c_k-\sigma+\epsilon)e^{i\theta})|}d\theta \leq \log{C_2(a_k,t_k)} + k\log{\frac{\log{T}}{\epsilon_0}} \ll_{a_k,t_k} \log{\frac{\log{T}}{\epsilon_0}}.
\end{align*}
Applying this to \eqref{eq:nhk}, we obtain
\begin{align*}
n_{H_k}(c_k-\sigma) = \frac{1}{\epsilon}O_{a_k,t_k}\left(\log{\frac{\log{T}}{\epsilon_0}}\right)
\end{align*}
which implies
\begin{equation*}
\arg{\frac{G_k}{\zeta}(\sigma+iT)} = \frac{1}{\epsilon}O_{a_k,t_k}\left(\log{\frac{\log{T}}{\epsilon_0}}\right).
\end{equation*}
Taking $\epsilon=\frac{1}{2}\left(\sigma-\frac{1}{2}-\epsilon_0\right) \quad \left(<\sigma-\frac{1}{2}-\epsilon_0\right)$, we obtain
\begin{align*}
\arg{\frac{G_k}{\zeta}(\sigma+iT)} = O_{a_k,t_k}\left(\frac{\log{\frac{\log{T}}{\epsilon_0}}}{\sigma-\frac{1}{2}-\epsilon_0}\right).
\end{align*}
\end{proof}

\begin{lem} \label{lem4}
Assume RH and let $A\geq2$ be fixed. Then there exists a constant $C_0>0$ such that
\begin{equation*}
\left|\zeta^{(k)}(\sigma+it)\right|\leq\exp{\left( C_0\left(\frac{(\log T)^{2(1-\sigma)}}{\log\log{T}}+(\log{T})^{1/10}\right)\right)}
\end{equation*}
holds for $T\geq t_k$, $\frac{T}{2}\leq t\leq 2T$, $\frac{1}{2}-\frac{1}{\log{\log{T}}}\leq\sigma\leq A$.
\end{lem}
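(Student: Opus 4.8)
The plan is to reduce the bound for $\zeta^{(k)}$ to the classical conditional bound for $\zeta$ and then recover the $k$-th derivative by Cauchy's integral formula, so that RH enters only through the bound on $\zeta$.

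First I would record the input bound on $\zeta$. Under RH one has $\log|\zeta(\sigma+it)|\ll(\log t)^{2(1-\sigma)}/\log\log t$ for $\tfrac12\le\sigma\le1$, $t\ge t_k$ (see \cite[Theorems~14.2 and~14.14(A)]{tit2}), and unconditionally $\log|\zeta(\sigma+it)|\ll_A\log\log t$ for $1\le\sigma\le A+1$. For $\sigma$ just below $\tfrac12$, say $\tfrac12-\tfrac3{\log\log T}\le\sigma<\tfrac12$, I would instead use the functional equation $\zeta(s)=F(s)\zeta(1-s)$ of \eqref{eq:fe} together with $\log|F(\sigma+it)|=(\tfrac12-\sigma)\log\tfrac t{2\pi}+O(1)$ --- which follows from Stirling's formula, just as in the expression $F(s)=\exp(\cdots)$ used in the proof of Proposition~\ref{prop2} --- since then $\tfrac12-\sigma\ll\tfrac1{\log\log t}$ and $(\log t)^{2\sigma}\le\log t$ give $\log|\zeta(\sigma+it)|\ll\log t/\log\log t$, which is $\le(\log t)^{2(1-\sigma)}/\log\log t$ because $2(1-\sigma)\ge1$ there. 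Combining the three ranges and using $\tfrac T2\le t\le 2T$ (so $\log t\asymp\log T$ and $\log\log t\asymp\log\log T$), I obtain
\[
\log|\zeta(\sigma+it)| \ll_A \frac{(\log T)^{2(1-\sigma)}}{\log\log T}+\log\log T ,
\]
uniformly for $\tfrac12-\tfrac3{\log\log T}\le\sigma\le A+1$ and $\tfrac T2\le t\le 2T$.

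Next I would pass to $\zeta^{(k)}$. For $s=\sigma+it$ in the region of the lemma I set $r:=1/\log T$; Cauchy's integral formula then gives $|\zeta^{(k)}(s)|\le (k!/r^k)\max_{|z-s|=r}|\zeta(z)|$. Every $z$ on that circle satisfies $\operatorname{Re}(z)\ge\sigma-r\ge\tfrac12-\tfrac3{\log\log T}$ (using $\tfrac1{\log T}\le\tfrac1{\log\log T}$) and $\tfrac T2-1\le\operatorname{Im}(z)\le 2T+1$, and moreover $(\log T)^{2(1-\operatorname{Re}(z))}\le(\log T)^{2r}(\log T)^{2(1-\sigma)}\ll(\log T)^{2(1-\sigma)}$ because $(\log T)^{2r}=\exp(2\log\log T/\log T)=1+o(1)$. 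Hence the displayed $\zeta$-bound applies on the whole circle, giving $\max_{|z-s|=r}|\zeta(z)|\le\exp\big(C(\tfrac{(\log T)^{2(1-\sigma)}}{\log\log T}+\log\log T)\big)$ for some $C=C(A)>0$; and $\log(k!/r^k)=\log k!+k\log\log T=O_k(\log\log T)$. Taking logarithms yields $\log|\zeta^{(k)}(s)|\ll_{k,A}(\log T)^{2(1-\sigma)}/\log\log T+\log\log T$, and since $\log\log T\ll(\log T)^{1/10}$ this is the asserted inequality for a suitable $C_0=C_0(k,A)>0$. For $T$ in the remaining bounded range (where the ``$T$ large'' steps above are not yet in force) the right-hand side is $\ge\exp(C_0(\log t_k)^{1/10})>0$ while $\zeta^{(k)}$ is bounded by continuity on the relevant bounded set, so enlarging $C_0$ covers that range as well.

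I expect the only genuinely substantial ingredient to be the conditional bound for $\zeta$ in the first step, in particular its uniformity right down to the critical line --- Littlewood's estimate $\zeta(\tfrac12+it)\ll\exp(c\log t/\log\log t)$ on RH --- after which the rest is bookkeeping. The one point requiring care is that the Cauchy circle in the second step protrudes a little to the left of $\sigma=\tfrac12$; this causes no loss, because the functional-equation estimate there is already of the same order $\log T/\log\log T$ as the main term, while differentiation costs only the factor $k!/r^k$, whose logarithm is $O_k(\log\log T)$ --- comfortably inside the allowed error $(\log T)^{1/10}$.
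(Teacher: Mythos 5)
Your proposal is correct and follows essentially the same route as the paper: quote the RH bound for $\zeta$ in a strip reaching slightly to the left of the critical line (the paper cites Titchmarsh (14.14.2), (14.14.5) and the functional-equation estimate directly, for $\tfrac12-\tfrac{2}{\log\log T}\le\sigma\le A+1$, $\tfrac T3\le t\le 3T$), and then recover $\zeta^{(k)}$ via Cauchy's integral formula, absorbing the differentiation cost and the secondary terms into $(\log T)^{1/10}$. The only differences are minor parameter choices: you take radius $1/\log T$ where the paper takes $\tfrac12(\log\log T)^{-1/k}$, and you should state your input bound for $\tfrac T3\le t\le 3T$ (rather than $\tfrac T2\le t\le 2T$) so that it covers the whole Cauchy circle.
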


\begin{proof}
Referring to \cite[(14.14.2), (14.14.5), and the first equation on p. 384]{tit2} (cf. \cite[pp. 2251--2252]{aka}), we know that
\begin{equation} \label{eq:lem4zen}
|\zeta(\sigma+it)| \leq \exp{\left(C_3\left(\frac{(\log{T})^{2(1-\sigma)}}{\log{\log{T}}}\right)+(\log{T})^{1/10}\right)}
\end{equation}
holds for  $\frac{1}{2}-\frac{2}{\log{\log{T}}}\leq\sigma\leq A+1$, $\frac{T}{3}\leq t\leq 3T$ for some constant $C_3>0$.

Applying Cauchy's integral formula, we see that
\begin{align*}
\zeta^{(k)}(s) = \frac{k!}{2\pi i} \int_{|z-s|=\epsilon} \frac{\zeta(z)}{(z-s)^{k+1}}dz \quad\quad \text{for} \quad 0<\epsilon<\frac{1}{2}
\end{align*}
holds in the region defined by  $\frac{1}{2}-\frac{1}{\log{\log{T}}}\leq\sigma\leq A$ and $\frac{T}{2}\leq t\leq 2T$.
Applying \eqref{eq:lem4zen} and by taking $\epsilon=\frac{1}{2(\log{\log{T}})^{1/k}} \,\, (<\frac{1}{2})$, we obtain Lemma \ref{lem4}.\\
\end{proof}

\begin{lem} \label{lem5}
Assume RH and let $T\geq t_k$.
Then for any $\frac{1}{2}\leq\sigma\leq\frac{3}{4}$, we have
\begin{align*}
\arg{G_k(\sigma+iT)} = O_{a_k}\left(\frac{(\log T)^{2(1-\sigma)}}{(\log{\log{T}})^{1/2}}\right).
\end{align*}
\end{lem}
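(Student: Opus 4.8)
The plan is to run the Jensen-type argument of Lemma~\ref{lem3} once more, with two essential modifications. First, the quantity to be bounded here is $\arg G_k$ itself rather than $\arg(G_k/\zeta)$, so that the sharp upper bound of Lemma~\ref{lem4} for $\zeta^{(k)}$ can be fed in \emph{directly}, with no division by $\zeta$ (which near the critical line would destroy the estimate). Second, the integral produced by Jensen's theorem has to be genuinely \emph{evaluated}, not merely bounded by its maximum on the circle, and it is this evaluation that produces the extra factor $(\log\log T)^{-1/2}$ beyond the off-critical-line size $(\log T)^{2(1-\sigma)}$.

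To set things up, keep the real number $c_k$ used in the proof of Lemma~\ref{lem3}, so that $c_k\le t_k/2$ and, by condition~1 of Lemma~\ref{lem1}, $\operatorname{Re}G_k(s)\ge\frac12$ for $\sigma\ge c_k$. For $\frac12\le\sigma\le\frac34$ let $q=q(\sigma,T)$ denote the number of zeros of $\operatorname{Re}G_k(u+iT)$ for $u\in[\sigma,c_k]$; since $\arg G_k(c_k+iT)$ is bounded and the argument varies by at most $\pi$ between consecutive real-part zeros, $|\arg G_k(\sigma+iT)|\le(q+1)\pi$ (if $\zeta^{(k)}$ vanishes on the segment, argue with the branch fixed in Proposition~\ref{prop2}). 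Introduce
\[
\Phi_k(z):=\frac12\bigl(G_k(z+iT)+G_k(z-iT)\bigr),
\]
which is holomorphic on a disc about $c_k$ of radius $\asymp c_k$ (its only singularities, coming from the pole of $\zeta^{(k)}$, are at $1\pm iT$) and equals $\operatorname{Re}G_k(u+iT)$ for real $u$ because $G_k(\bar z)=\overline{G_k(z)}$. Hence $q\le n_{\Phi_k}(c_k-\sigma)$, where $n_{\Phi_k}(r):=\#\{z:\Phi_k(z)=0,\,|z-c_k|\le r\}$.

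Now fix $\epsilon\asymp 1/\log\log T$, small enough that the circle $|z-c_k|=c_k-\sigma+\epsilon$ lies inside both the holomorphy disc of $\Phi_k$ and the region $\operatorname{Re}z\ge\frac12-\frac1{\log\log T}$, $\frac T2\le|\operatorname{Im}(z\pm iT)|\le 2T$ where Lemma~\ref{lem4} applies. Jensen's theorem, exactly as in Lemma~\ref{lem3}, gives
\[
n_{\Phi_k}(c_k-\sigma)\ \ll_k\ \frac1\epsilon\Bigl(\frac1{2\pi}\int_0^{2\pi}\log\bigl|\Phi_k\bigl(c_k+(c_k-\sigma+\epsilon)e^{i\theta}\bigr)\bigr|\,d\theta+O(1)\Bigr),
\]
the $O(1)$ absorbing $-\log|\Phi_k(c_k)|$, which is bounded by the choice of $c_k$. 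On the circle one has $\operatorname{Re}z=c_k+r\cos\theta$ with $r=c_k-\sigma+\epsilon$, so $|G_k(w)|\ll_k 2^{\operatorname{Re}w}|\zeta^{(k)}(w)|$ combined with Lemma~\ref{lem4} (applied to $z+iT$ and to $\overline{z-iT}$, with $A$ a fixed constant exceeding $2c_k$) gives $\log|\Phi_k(c_k+re^{i\theta})|\le O_k(1)+C_0\bigl((\log T)^{2(1-c_k-r\cos\theta)}/\log\log T+(\log T)^{1/10}\bigr)$. Integrating in $\theta$, the dominant contribution is a constant multiple of $(\log T)^{2(1-c_k)}(\log\log T)^{-1}$ times
\[
\frac1{2\pi}\int_0^{2\pi}(\log T)^{-2r\cos\theta}\,d\theta\ =\ I_0(2r\log\log T)\ \ll\ \frac{e^{2r\log\log T}}{\sqrt{r\log\log T}},
\]
where $I_0$ is the modified Bessel function and the last bound is Laplace's method near $\theta=\pi$. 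Since $1-c_k+r=1-\sigma+\epsilon$ and $r\asymp_k 1$, and since $(\log T)^{1/10}$ is dominated by $(\log T)^{2(1-\sigma)}/\log\log T$ when $\frac12\le\sigma\le\frac34$, this yields $\frac1{2\pi}\int_0^{2\pi}\log|\Phi_k(c_k+re^{i\theta})|\,d\theta\ll_k(\log T)^{2(1-\sigma)}(\log T)^{2\epsilon}(\log\log T)^{-3/2}$. As $\epsilon\asymp 1/\log\log T$ makes $(\log T)^{2\epsilon}\asymp 1$ and $1/\epsilon\asymp\log\log T$, we conclude $n_{\Phi_k}(c_k-\sigma)\ll_k(\log T)^{2(1-\sigma)}/(\log\log T)^{1/2}$, hence $\arg G_k(\sigma+iT)=O_{a_k}\bigl((\log T)^{2(1-\sigma)}/(\log\log T)^{1/2}\bigr)$.

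The step I expect to be the main obstacle is the evaluation of the angular integral, which has no analogue in the proof of Lemma~\ref{lem3}, where the counterpart of $\Phi_k$ was bounded uniformly on the circle. Replacing $\log|\Phi_k|$ by its maximum on the circle --- attained near $\theta=\pi$, where $\operatorname{Re}z$ is smallest and $|\zeta^{(k)}|$ largest --- would give only $n_{\Phi_k}\ll_k(\log T)^{2(1-\sigma)}$: the correct power of $\log T$, but short of the target by $(\log\log T)^{1/2}$. The missing square root is recovered precisely by carrying out the $\theta$-integration honestly and recognising it as (a multiple of) the Bessel function $I_0$, whose growth $I_0(x)\ll e^x/\sqrt x$ supplies the factor $1/\sqrt{\log\log T}$. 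The only other point to watch is that the whole Jensen circle must lie in the range of validity of Lemma~\ref{lem4}; this caps $\epsilon$ at order $1/\log\log T$, and optimising the trade-off between $(\log T)^{2\epsilon}$, which must remain $O(1)$, and $1/\epsilon$, which should be as small as possible, then pins down $\epsilon\asymp 1/\log\log T$.
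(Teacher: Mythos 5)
Your proposal is correct and is essentially the proof the paper intends: the paper simply defers to Akatsuka's Lemma 2.4 (\cite{aka}, pp.\ 2252--2253) with Lemma~\ref{lem4} substituted for his Lemma 2.6, and that argument is exactly your Jensen-type count of sign changes of $\operatorname{Re}G_k$ via the symmetrized function $\tfrac12\bigl(G_k(z+iT)+G_k(z-iT)\bigr)$, with the circle average of $\log|G_k|$ evaluated by Laplace's method (the Bessel-type integral) and $\epsilon\asymp1/\log\log T$, which is precisely what produces the $(\log\log T)^{-1/2}$ saving.
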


\begin{proof}
The proof proceeds in the same way as the proof of Lemma 2.4 of \cite{aka}.
Refer to \cite[pp. 2252--2253]{aka} for the detailed proof and use Lemma \ref{lem4} above in place of Lemma 2.6 of \cite{aka}.\\
\end{proof}

\begin{rmk}
The restrictions of the lower bound of $T$ we gave in Lemmas \ref{lem3}, \ref{lem4}, and \ref{lem5} are not essential, but they are sufficient for our needs.
We may let $T$ be any positive number in Lemmas \ref{lem3}, \ref{lem4}, and \ref{lem5}, however in that case, we need to modify some calculations in the proofs. Thus we used these restrictions for our convenience.\\
\end{rmk}

\begin{proof}[\textbf{Proof of Theorem \ref{cha1}}]

First of all, we consider for $T\geq t_k$ which satisfies $\zeta^{(k)}(\sigma+iT)\neq0$ and $\zeta(\sigma+iT)\neq0$ for any $\sigma\in\mathbb{R}$. By Lemma \ref{lem3}, we have
\begin{align*}
\int_{\frac{1}{2}+2\epsilon_0}^{a_k} \arg{\frac{G_k}{\zeta}(\sigma+iT)}d\sigma \ll_{a_k,t_k} \int_{\frac{1}{2}+2\epsilon_0}^{a_k} \frac{\log{\frac{\log{T}}{\epsilon_0}}}{\sigma-\frac{1}{2}-\epsilon_0}d\sigma
\ll_{a_k} \log{\frac{\log{T}}{\epsilon_0}}\log{\frac{1}{\epsilon_0}}.
\end{align*}

\vspace{2mm}
Next, by Lemma \ref{lem5},
\begin{align*}
\arg{G_k(\sigma+iT)} = O_{a_k}\left(\frac{(\log T)^{2(1-\sigma)}}{(\log{\log{T}})^{1/2}}\right) \quad\quad \text{for} \quad \frac{1}{2}\leq\sigma\leq\frac{3}{4}
\end{align*}
and from (2.23) of \cite[p. 2251]{aka} (cf. \cite[(14.14.3) and (14.14.5)]{tit2}),
RH implies that
\begin{equation*}
\arg{\zeta(\sigma+iT)} = O\left(\frac{(\log T)^{2(1-\sigma)}}{\log{\log{T}}}\right)
\end{equation*}
holds uniformly for $\frac{1}{2}\leq\sigma\leq\frac{3}{4}$.
Thus,
\begin{align*}
\int_{\frac{1}{2}}^{\frac{1}{2}+2\epsilon_0} \arg{\frac{G_k}{\zeta}(\sigma+iT)}d\sigma \ll_{a_k} \frac{\log T}{(\log{\log{T}})^{1/2}}\epsilon_0.
\end{align*}

\vspace{2mm}
Now we take $\epsilon_0 = \frac{1}{4\log{T}} \,\, \left(<\frac{1}{2}\frac{1}{\log{T}}\right)$, then we have
\begin{align*}
\int_{\frac{1}{2}}^{a_k} \arg{\frac{G_k}{\zeta}(\sigma+iT)}d\sigma \ll_{a_k,t_k} (\log{\log{T}})^2.
\end{align*}
Applying this to Proposition \ref{prop2} and noting that $a_k$ and $t_k$ are fixed constants that depend only on $k$, we have
\begin{equation} \label{eq:kari1}
\begin{aligned}
\sum_{\substack{\rho^{(k)}=\beta^{(k)}+i\gamma^{(k)},\\ 0<\gamma^{(k)}\leq T}} \left(\beta^{(k)}-\frac{1}{2}\right) = \,\frac{kT}{2\pi}\log{\log{\frac{T}{2\pi}}} &+ \frac{1}{2\pi}\left(\frac{1}{2}\log{2} - k\log{\log{2}}\right)T - k\operatorname{Li}\left(\frac{T}{2\pi}\right)\\
&\quad\quad\quad\quad\quad\quad\quad\quad\quad+ O_k((\log{\log{T}})^2).
\end{aligned}
\end{equation}

\vspace{3mm}
Secondly, for $2\pi<T<t_k$, we are adding some finite number of terms which depend on $t_k$, and thus depend only on $k$ so this can be included in the error term.\\

Thirdly, for $T\geq t_k$ such that $\zeta^{(k)}(\sigma+iT)=0$ or $\zeta(\sigma+iT)=0$ for some $\sigma\in\mathbb{R}$, there is some increment in the value of $\sum_{\substack{\rho^{(k)}=\beta^{(k)}+i\gamma^{(k)},\\ 0<\gamma^{(k)}\leq T}} \left(\beta^{(k)}-\frac{1}{2}\right)$ as much as
$$
\sum_{\substack{\rho^{(k)}=\beta^{(k)}+i\gamma^{(k)},\\ \gamma^{(k)} = T}} \left(\beta^{(k)}-\frac{1}{2}\right).
$$
Now we estimate this and we show that this can be included in the error term of \eqref{eq:kari1}.
We start by taking a small $0<\epsilon<1$ such that $\zeta^{(k)}(\sigma+i(T\pm\epsilon))\neq0$ and $\zeta(\sigma+i(T\pm\epsilon))\neq0$ for any $\sigma\in\mathbb{R}$.
According to \eqref{eq:kari1},
\begin{align*}
\sum_{\substack{\rho^{(k)}=\beta^{(k)}+i\gamma^{(k)},\\ 0<\gamma^{(k)}\leq T+\epsilon}} \left(\beta^{(k)}-\frac{1}{2}\right) = \frac{k(T+\epsilon)}{2\pi}\log{\log{\frac{T+\epsilon}{2\pi}}} &+ \frac{1}{2\pi}\left(\frac{1}{2}\log{2} - k\log{\log{2}}\right)(T+\epsilon)\\
&- k\operatorname{Li}\left(\frac{T+\epsilon}{2\pi}\right) + O_k((\log{\log{T}})^2),
\end{align*}
\begin{align*}
\sum_{\substack{\rho^{(k)}=\beta^{(k)}+i\gamma^{(k)},\\ 0<\gamma^{(k)}\leq T-\epsilon}} \left(\beta^{(k)}-\frac{1}{2}\right) = \frac{k(T-\epsilon)}{2\pi}\log{\log{\frac{T-\epsilon}{2\pi}}} &+ \frac{1}{2\pi}\left(\frac{1}{2}\log{2} - k\log{\log{2}}\right)(T-\epsilon)\\
&- k\operatorname{Li}\left(\frac{T-\epsilon}{2\pi}\right) + O_k((\log{\log{T}})^2).
\end{align*}
Thus,
\begin{align*}
\sum_{\substack{\rho^{(k)}=\beta^{(k)}+i\gamma^{(k)},\\ T-\epsilon<\gamma^{(k)}\leq T+\epsilon}} \left(\beta^{(k)}-\frac{1}{2}\right) &= \frac{k(T+\epsilon)}{2\pi}\log{\log{\frac{T+\epsilon}{2\pi}}} - \frac{k(T-\epsilon)}{2\pi}\log{\log{\frac{T-\epsilon}{2\pi}}} \\
&\quad+ \frac{\epsilon}{\pi}\left(\frac{1}{2}\log{2} - k\log{\log{2}}\right) \\
&\quad- k\left(\operatorname{Li}\left(\frac{T+\epsilon}{2\pi}\right) - \operatorname{Li}\left(\frac{T-\epsilon}{2\pi}\right)\right) + O_k((\log{\log{T}})^2) \\
&= \frac{k\epsilon}{\pi}\log{\log{\frac{T}{2\pi}}} + \frac{k\epsilon}{\pi\log{\frac{T}{2\pi}}} + \frac{\epsilon}{\pi}\left(\frac{1}{2}\log{2} - k\log{\log{2}}\right) \\
&\quad- \frac{k\epsilon}{\pi\log{\frac{T}{2\pi}}} + O\left(\frac{\epsilon^2}{T\log{T}}\right) + O_k((\log{\log{T}})^2) \\
&= \frac{k\epsilon}{\pi}\log{\log{\frac{T}{2\pi}}} + \frac{\epsilon}{\pi}\left(\frac{1}{2}\log{2} - k\log{\log{2}}\right) + O_k((\log{\log{T}})^2).
\end{align*}
This gives us
\begin{equation*}
\begin{aligned}
\sum_{\substack{\rho^{(k)}=\beta^{(k)}+i\gamma^{(k)},\\ T-\epsilon<\gamma^{(k)}\leq T+\epsilon}} \left(\beta^{(k)}-\frac{1}{2}\right) = O_k((\log{\log{T}})^2)
\end{aligned}
\end{equation*}
which implies
\begin{equation*}
\begin{aligned}
\sum_{\substack{\rho^{(k)}=\beta^{(k)}+i\gamma^{(k)},\\ \gamma^{(k)} = T}} \left(\beta^{(k)}-\frac{1}{2}\right) = O_k((\log{\log{T}})^2).
\end{aligned}
\end{equation*}
Therefore, this increment can also be included in the error term.\\

Hence,
\begin{align*}
\sum_{\substack{\rho^{(k)}=\beta^{(k)}+i\gamma^{(k)},\\ 0<\gamma^{(k)}\leq T}} \left(\beta^{(k)}-\frac{1}{2}\right) = \frac{kT}{2\pi}\log{\log{\frac{T}{2\pi}}} &+ \frac{1}{2\pi}\left(\frac{1}{2}\log{2} - k\log{\log{2}}\right)T \\
&- k\operatorname{Li}\left(\frac{T}{2\pi}\right) + O_k((\log{\log{T}})^2)
\end{align*}
holds for any $T>2\pi$.

\end{proof}

\begin{proof}[\textbf{Proof of Corollary \ref{cha12}}]

This is an immediate consequence of Theorem \ref{cha1}.
For the proof, refer to \cite[p. 58 (the ending part of Section 3)]{lev}.

\end{proof}


\section{Proof of Theorem \ref{cha2}}
\label{sec:3}

In this section we give the proof of Theorem \ref{cha2}.
We first show the following proposition.

\begin{prop} \label{prop6}
Assume RH. Then for $T\geq2$ which satisfies $\zeta(\sigma+iT)\neq0$ and $\zeta^{(k)}(\sigma+iT)\neq0$ for all $\sigma\in\mathbb{R}$, we have
\begin{align*}
N_k(T) = \frac{T}{2\pi}\log{\frac{T}{4\pi}}-\frac{T}{2\pi}+\frac{1}{2\pi}\arg{G_k\left(\frac{1}{2}+iT\right)}+\frac{1}{2\pi}\arg{\zeta\left(\frac{1}{2}+iT\right)}+O_k(1)
\end{align*}
where the arguments are taken as in Proposition \ref{prop2}.
\end{prop}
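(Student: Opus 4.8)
The plan is to count the zeros of $G_k$ (which coincide with those of $\zeta^{(k)}$, since $2^{s}/(\log 2)^{k}$ is entire and zero-free) inside a rectangle by the argument principle, and to identify the resulting total change of argument with the main term $T\log\tfrac{T}{4\pi}-T$ and with the quantities $\arg G_k(\tfrac12+iT)$ and $\arg\zeta(\tfrac12+iT)$. First, two reductions. If $2\le T<t_k$, then by \eqref{eq:nkt} applied at height $T$ and by the boundedness of the branches of $\arg\zeta$ and $\arg G_k$ of Proposition \ref{prop2} on the relevant compact set, both sides of the asserted identity are $O_k(1)$ and there is nothing to prove; so I may assume $T\ge t_k$. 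Second, I record two zero-free facts: by condition 1 of Lemma \ref{lem1}, $G_k(s)$ (hence $\zeta^{(k)}(s)$) does not vanish for $\sigma\ge a_k$; and from the factorisation $\zeta^{(k)}(s)=F^{(k)}(s)\zeta(1-s)\bigl(1-\sum_{j=1}^{k}\binom{k}{j}(-1)^{j-1}\tfrac{1}{F^{(k)}/F^{(k-j)}(s)}\tfrac{\zeta^{(j)}}{\zeta}(1-s)\bigr)$, together with \eqref{eq:Fkkj}, condition 2 of Lemma \ref{lem1}, and the fact that $F$ has only real zeros, one gets $\zeta^{(k)}(s)\ne 0$ for $\sigma\le\sigma_k$, $t\ge 2$ (the three factors on the right are individually nonzero there). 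Combined with $N_k(t_k)=O_k(1)$ (again \eqref{eq:nkt}, since $t_k$ depends only on $k$), these give $N_k(T)=\#\{\zeta^{(k)}(s)=0:\sigma_k<\sigma<a_k,\ t_k<t<T\}+O_k(1)$.

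I would then apply the argument principle to $G_k$ on the positively oriented rectangle $R$ with vertices $\sigma_k+it_k$, $a_k+it_k$, $a_k+iT$, $\sigma_k+iT$: there are no zeros of $\zeta^{(k)}$ on $\partial R$ (condition 5 of Lemma \ref{lem1} on the bottom, the hypothesis on $T$ on the top, condition 1 on the right, and the zero-free statement above on the left), and no pole of $\zeta^{(k)}$ inside $R$ (the pole at $s=1$ lies below it), so $2\pi\,\#\{\zeta^{(k)}=0\text{ in }R\}=\Delta_{\partial R}\arg G_k$. Along the bottom, $G_k$ is a fixed nonvanishing function on a fixed segment, contributing $O_k(1)$. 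Along the right, $\operatorname{Re}G_k>\tfrac12$ by condition 1, so $\arg G_k$ stays in an interval of length $<\pi$ and the contribution is $O(1)$; the same estimate shows that, for the branch of Proposition \ref{prop2}, $\arg G_k(a_k+iT)=O(1)$, so the top edge contributes $\arg G_k(\sigma_k+iT)+O(1)$, with $\arg G_k(\sigma_k+iT)$ taken as in Proposition \ref{prop2}. Along the left edge $\sigma=\sigma_k$ (with $t$ descending from $T$ to $t_k$; all factors of $G_k(s)=(-1)^k\tfrac{2^{s}}{(\log 2)^{k}}F(s)\tfrac{F^{(k)}}{F}(s)\zeta(1-s)\bigl(1-\sum_{j}\cdots\bigr)$ being nonvanishing there), the exponential factors $4^{s}\pi^{s-1}$ coming from $\tfrac{2^s}{(\log 2)^k}$ and from $2^s\pi^{s-1}$ inside $F$ contribute $(t_k-T)\log(4\pi)$; the factor $\sin(\tfrac{\pi s}{2})$ inside $F$ contributes $O(1)$ via \eqref{eq:logsin}; Stirling's formula applied to $\Gamma(1-s)$ contributes $T\log T-T+O_k(1)$; the factor $\tfrac{F^{(k)}}{F}$ contributes $O(1)$, its argument lying in an interval of length $\tfrac{\pi}{3}$ by condition 3; and $\zeta(1-s)$ and the bracket contribute $O(1)$, each remaining in a fixed disc about $1$ since $\operatorname{Re}(1-s)\ge 2$ and by condition 2. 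Hence the left edge contributes $T\log T-T-T\log(4\pi)+O_k(1)=T\log\tfrac{T}{4\pi}-T+O_k(1)$, and summing the four contributions gives $2\pi N_k(T)=T\log\tfrac{T}{4\pi}-T+\arg G_k(\sigma_k+iT)+O_k(1)$.

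It remains to relate $\arg G_k(\sigma_k+iT)$ — the change of $\arg G_k$ along the horizontal segment from $\tfrac12+iT$ to $\sigma_k+iT$ — to $\arg G_k(\tfrac12+iT)+\arg\zeta(\tfrac12+iT)$. On that segment every factor in the functional-equation factorisation is nonvanishing: $\zeta^{(k)}$ and $\zeta$ by the hypothesis on $T$; $F$ since it has only real zeros; $\tfrac{F^{(k)}}{F}$ by condition 3 of Lemma \ref{lem1}; $\zeta(1-s)$ because $\operatorname{Re}(1-s)\ge\tfrac12$, RH gives $\zeta\ne 0$ for $\operatorname{Re}>\tfrac12$, and $\zeta(\tfrac12-iT)=\overline{\zeta(\tfrac12+iT)}\ne 0$; and the bracket, which equals $\tfrac{1}{F^{(k)}/F(s)}\tfrac{\zeta^{(k)}}{\zeta}(s)$ by \eqref{eq:fe2}, by conditions 3 and 4. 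Since the segment has the fixed length $\tfrac12-\sigma_k$ at height $T$, the contribution of $\tfrac{2^s}{(\log 2)^k}$ vanishes (its argument is constant in $\sigma$), that of $F$ is $O_k(1)$ because $F$ has no zeros there and $\operatorname{Im}\tfrac{F'}{F}(\sigma+iT)$ is bounded on it, that of $\tfrac{F^{(k)}}{F}$ is $O_k(1)$ (argument in an interval of length $\tfrac{\pi}{3}$ by condition 3), and that of the bracket is $O_k(1)$ (argument in a bounded interval by \eqref{eq:arg}); the only unbounded contribution is that of $\zeta(1-s)$, and, tracking the branch of Proposition \ref{prop2} and using $\zeta(\bar s)=\overline{\zeta(s)}$, this contribution is $\arg\zeta(\tfrac12+iT)+O(1)$. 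Therefore $\arg G_k(\sigma_k+iT)=\arg G_k(\tfrac12+iT)+\arg\zeta(\tfrac12+iT)+O_k(1)$, and substituting into the identity of the previous paragraph and dividing by $2\pi$ yields Proposition \ref{prop6}.

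The main difficulty I expect is the bookkeeping of logarithmic branches: $\arg G_k(\tfrac12+iT)$ and $\arg\zeta(\tfrac12+iT)$ in the statement are the branches of Proposition \ref{prop2}, obtained by continuation from $\sigma=+\infty$, whereas the branches naturally at hand for $\tfrac{F^{(k)}}{F}$, $\tfrac{\zeta^{(k)}}{\zeta}$, and the bracket are those fixed by conditions 3 and 4 of Lemma \ref{lem1} and by \eqref{eq:arg}; one must keep careful track of which branch is used where so that the changes of argument add up correctly (in particular along $t=T$ and across the segment $[\tfrac12+iT,\sigma_k+iT]$), and separately verify the elementary but slightly delicate zero-freeness of the various factors on the left edge $\sigma=\sigma_k$ and on that segment. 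All the analytic inputs — Stirling's formula, the log-sine expansion \eqref{eq:logsin}, and the estimates collected in Lemma \ref{lem1} — are already available, so beyond this branch bookkeeping the remaining work is routine.
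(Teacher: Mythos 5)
Your outline is correct, but it proves Proposition \ref{prop6} by a genuinely different route than the paper. The paper never leaves the Littlewood-lemma framework of Proposition \ref{prop2}: it writes \eqref{eq:lit} twice, with left abscissas $b=\tfrac12-\delta$ and $b'=\tfrac12-\tfrac{\delta}{2}$, subtracts so that the weighted zero-sums collapse to $\pi\delta\,(N_k(T)-N_k(t_k))$, re-estimates the resulting differences $J_{12},\dots,J_{15}$ exactly as the $I_{1j}$ were handled (using condition 3, \eqref{eq:fe2} and \eqref{eq:arg} for $J_{13}$ and $J_{15}$), and finally lets $\delta\to0$, the mean value theorem turning the two short integrals over $[b,b']$ and $[1-b',1-b]$ into $\tfrac{1}{2\pi}\arg G_k(\tfrac12+iT)$ and $\tfrac{1}{2\pi}\arg\zeta(\tfrac12+iT)$. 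You instead run a Backlund-style argument-principle computation for $G_k$ on the large rectangle $[\sigma_k,a_k]\times[t_k,T]$, extract the main term $T\log\tfrac{T}{4\pi}-T$ from Stirling and \eqref{eq:logsin} along the far-left edge $\sigma=\sigma_k$, and convert $\arg G_k(\sigma_k+iT)$ into $\arg G_k(\tfrac12+iT)+\arg\zeta(\tfrac12+iT)+O_k(1)$ by a factor-by-factor walk along $t=T$ using the functional-equation factorisation, conditions 2--4, and \eqref{eq:arg}. Each approach has its price: the paper's differencing keeps the contour near the critical strip, so no new information about the half-plane $\sigma\le\sigma_k$ is needed and the Proposition \ref{prop2} estimates are recycled verbatim, with the point values at $\tfrac12+iT$ produced painlessly by the $\delta\to0$ limit; your direct winding-number count is more classical and exhibits $N_k(T)$ transparently, but it obliges you to supply the extra ingredients you correctly identify --- the zero-freeness of $\zeta^{(k)}$ for $\sigma\le\sigma_k$, $t\ge2$ (which your use of \eqref{eq:Fj}--\eqref{eq:Fkkj} and condition 2 does give), $N_k(t_k)=O_k(1)$ via \eqref{eq:nkt}, the Stirling computation on the vertical line $\sigma=\sigma_k$, and the branch bookkeeping showing that horizontal continuation along $t=T$ (legitimate since $\zeta^{(k)}$ and $\zeta$ do not vanish on that line by hypothesis) reproduces the branches of Proposition \ref{prop2}. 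Two small points to tighten when writing it up: at the endpoint $\sigma=\tfrac12$ of the top segment, condition 4 and \eqref{eq:arg} are stated only for $\sigma<\tfrac12$, so appeal to continuity (the bracket is nonzero there because $\zeta^{(k)}(\tfrac12+iT)\neq0$, $\zeta(\tfrac12+iT)\neq0$ and $|F^{(k)}/F|\ge1$); and for $2\le T<t_k$ you should say a word on why the two argument terms themselves are $O_k(1)$ (e.g.\ $\arg\zeta(\tfrac12+iT)=O(\log t_k)$ and a Jensen-type bound for $\arg G_k$), though the paper is equally terse at this step.
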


\begin{proof}
The steps of the proof also follow those of the proof of Proposition 3.1 of \cite{aka}.
We take $a_k$, $\sigma_k$, $t_k$, $T$, $\delta$, and $b$ as in the beginning of the proof of Proposition \ref{prop2}. We let $b' := \frac{1}{2}-\frac{\delta}{2}$. Replacing $b$ by $b'$ in \eqref{eq:lit}, we have
\begin{align*}
2\pi\sum_{\substack{\rho^{(k)}=\beta^{(k)}+i\gamma^{(k)},\\ 0<\gamma^{(k)}\leq T}} (\beta^{(k)}-b') &=
\int_{t_k}^T\log{|G_k(b'+it)|}dt - \int_{t_k}^T\log{|G_k(a_k+it)|}dt \\
&\quad- \int_{b'}^{a_k}\arg{G_k(\sigma+it_k)}d\sigma + \int_{b'}^{a_k}\arg{G_k(\sigma+iT)}d\sigma.
\end{align*}

Subtracting this from \eqref{eq:lit}, we have
\begin{align*}
\pi\delta (N_k(T) - N_k(t_k)) &=
\int_{t_k}^T\log{|G_k(b+it)|}dt - \int_{t_k}^T\log{|G_k(b'+it)|}dt\\
&\quad- \int_b^{b'}\arg{G_k(\sigma+it_k)}d\sigma + \int_b^{b'}\arg{G_k(\sigma+iT)}d\sigma\\
&=: J_1 + J_2 + J_3 + \int_b^{b'}\arg{G_k(\sigma+iT)}d\sigma. \numberthis \label{eq:lit'}
\end{align*}

\vspace{2mm}
Referring to the estimate of $I_3$ in the proof of Proposition \ref{prop2} (cf. \cite[p. 2246]{aka}), we can easily show that
\begin{equation*}
J_3 = O_{t_k}(\delta).
\end{equation*}

\vspace{2mm}
Now we estimate $J_1 + J_2$.
From \eqref{eq:i1}, we have
\begin{align*}
J_1+J_2 &= \int_{t_k}^T\log{|G_k(b+it)|}dt - \int_{t_k}^T\log{|G_k(b'+it)|}dt\\
&= ((b-b')\log{2})(T-t_k) + \int_{t_k}^T (\log{|F(b+it)|}-\log{|F(b'+it)|})dt\\
&\quad+ \int_{t_k}^T \left(\log{\left|\frac{F^{(k)}}{F}(b+it)\right|}-\log{\left|\frac{F^{(k)}}{F}(b'+it)\right|}\right)dt\\
&\quad+ \int_{t_k}^T (\log{|\zeta(1-b-it)|}-\log{|\zeta(1-b'-it)|})dt\\
&\quad+ \int_{t_k}^T \Bigg(\log{\left|1 -  \sum_{j=1}^k {{k}\choose{j}} (-1)^{j-1} \frac{1}{\frac{F^{(k)}}{F^{(k-j)}}(b+it)}\frac{\zeta^{(j)}}{\zeta}(1-b-it)\right|}\\
&\quad\quad\quad\quad\quad\quad- \log{\left|1 -  \sum_{j=1}^k {{k}\choose{j}} (-1)^{j-1} \frac{1}{\frac{F^{(k)}}{F^{(k-j)}}(b'+it)}\frac{\zeta^{(j)}}{\zeta}(1-b'-it)\right|}\Bigg)dt\\
&=: \left(\left(-\frac{\delta}{2}\log{2}\right)T + O_{t_k}(\delta)\right) + J_{12} + J_{13} + J_{14} + J_{15}.
\end{align*}

\vspace{2mm}
Referring to \cite[pp. 2255--2256]{aka}, we have
\begin{align*}
J_{12} = \frac{\delta}{2}\left(T\log{\frac{T}{2\pi}}-T\right) + O_{t_k}(\delta),
\end{align*}
\begin{equation*}
J_{14} = \int_{1-b'}^{1-b}\arg{\zeta(\sigma+iT)}d\sigma + O_{t_k}(\delta).
\end{equation*}

\vspace{2mm}
We only need to estimate $J_{13}$ and $J_{15}$.
We begin with the estimation of $J_{13}$. We determine the logarithmic branch of $\log{\frac{F^{(k)}}{F}(s)}$ for $0<\sigma<\frac{1}{2}$ and $t>t_k-1$ as in condition 3 of Lemma \ref{lem1}. We then have $\arg{\frac{F^{(k)}}{F}(s)}\in(\alpha_k\pi/6,\beta_k\pi/6)$, where the pair $(\alpha_k,\beta_k)$ is defined as in Lemma \ref{lem1}.

As in \cite[p. 2255]{aka}, we apply Cauchy's integral theorem to $\log{\frac{F^{(k)}}{F}(s)}$ on the rectangle with vertices $b+it_k$, $b'+it_k$, $b'+iT$, and $b+iT$ and take the imaginary part, then we obtain
\begin{align*}
J_{13} =  \int_b^{b'} \arg{\frac{F^{(k)}}{F}(\sigma+it_k)}d\sigma - \int_b^{b'} \arg{\frac{F^{(k)}}{F}(\sigma+iT)}d\sigma = O_k(\delta).
\end{align*}

\vspace{2mm}
Finally, we estimate $J_{15}$. 
We determine the logarithmic branch of
\begin{align*}
\log{\left(1 - \sum_{j=1}^k {{k}\choose{j}} (-1)^{j-1} \frac{1}{\frac{F^{(k)}}{F^{(k-j)}}(s)}\frac{\zeta^{(j)}}{\zeta}(1-s)\right)}
\end{align*}
in the same manner as that in the estimation of $I_{15}$ in the proof of Proposition \ref{prop2}, then it is holomorphic in the region $0<\sigma<\frac{1}{2},\, t>t_k-1$.
Applying Cauchy's integral theorem to it on the path taken for estimating $J_{13}$, we have
\begin{align*}
J_{15} &= \int_b^{b'} \arg{\left(1 - \sum_{j=1}^k {{k}\choose{j}} (-1)^{j-1} \frac{1}{\frac{F^{(k)}}{F^{(k-j)}}(\sigma+it_k)}\frac{\zeta^{(j)}}{\zeta}(1-\sigma-it_k)\right)} \\
&\quad- \int_b^{b'} \arg{\left(1 - \sum_{j=1}^k {{k}\choose{j}} (-1)^{j-1} \frac{1}{\frac{F^{(k)}}{F^{(k-j)}}(\sigma+iT)}\frac{\zeta^{(j)}}{\zeta}(1-\sigma-iT)\right)}d\sigma.
\end{align*}
Again using \eqref{eq:fe2}, 
\begin{align*}
J_{15} = \int_b^{b'} \arg{\left(\frac{1}{\frac{F^{(k)}}{F}(\sigma+it_k)}\frac{\zeta^{(k)}}{\zeta}(\sigma+it_k)\right)}d\sigma - \int_b^{b'} \arg{\left(\frac{1}{\frac{F^{(k)}}{F}(\sigma+iT)}\frac{\zeta^{(k)}}{\zeta}(\sigma+iT)\right)}d\sigma.
\end{align*}
Applying \eqref{eq:arg}, we obtain
\begin{align*}
J_{15} = O_k(\delta).
\end{align*}
Hence, since $t_k$ is a fixed constant that depends only on $k$,
\begin{align*}
J_1+J_2 = \frac{\delta}{2}\left(T\log{\frac{T}{4\pi}}-T\right) + \int_{1-b'}^{1-b} \arg{\zeta(\sigma+iT)}d\sigma + O_k(\delta).
\end{align*}

\vspace{3mm}
Inserting the estimates of $J_1+J_2$ and $J_3$ into \eqref{eq:lit'}, we have
\begin{equation} \label{eq:n2t}
N_k(T) = \frac{T}{2\pi}\log{\frac{T}{4\pi}} - \frac{T}{2\pi} + \frac{1}{\pi\delta}\left(\int_{1-b'}^{1-b} \arg{\zeta(\sigma+iT)}d\sigma + \int_b^{b'} \arg{G_k(\sigma+iT)}d\sigma \right)+ O_k(1).
\end{equation}

\vspace{3mm}
Taking the limit $\delta\rightarrow0$ and applying the mean value theorem,
\begin{align*}
\lim_{\delta\rightarrow0} \frac{1}{\pi\delta}\int_{1-b'}^{1-b} \arg{\zeta(\sigma+iT)}d\sigma = \frac{1}{2\pi}\arg{\zeta\left(\frac{1}{2}+iT\right)}
\end{align*}
by noting that $b=\frac{1}{2}-\delta$ and $b'=\frac{1}{2}-\frac{\delta}{2}$.
And similarly,
\begin{align*}
\lim_{\delta\rightarrow0} \frac{1}{\pi\delta}\int_b^{b'} \arg{G_k(\sigma+iT)}d\sigma = \frac{1}{2\pi}\arg{G_k\left(\frac{1}{2}+iT\right)}.
\end{align*}
Substituting these into \eqref{eq:n2t}, we have
\begin{align*}
N_k(T) = \frac{T}{2\pi}\log{\frac{T}{4\pi}} - \frac{T}{2\pi} + \frac{1}{2\pi}\arg{G_k\left(\frac{1}{2}+iT\right)} + \frac{1}{2\pi}\arg{\zeta\left(\frac{1}{2}+iT\right)} + O_k(1).
\end{align*}

\vspace{3mm}
If $2\leq T<t_k$, then $N_k(T)\leq N_k(t_k) = O_{t_k}(1) = O_k(1)$.
Hence the above equation holds for any $T\geq2$ which satisfies the conditions of Proposition \ref{prop6}.\\
\end{proof}

\begin{proof}[\textbf{Proof of Theorem \ref{cha2}}]

Firstly we consider for $T\geq3$ which satisfies $\zeta^{(k)}(\sigma+iT)\neq0$ and $\zeta(\sigma+iT)\neq0$ for any $\sigma\in\mathbb{R}$.
By Lemma \ref{lem5},
\begin{align*}
\arg{G_k\left(\frac{1}{2}+iT\right)} = O_{a_k}\left(\frac{\log T}{(\log{\log{T}})^{1/2}}\right)
\end{align*}
and again from equation (2.23) of \cite[p. 2251]{aka}, we have
\begin{align*}
\arg{\zeta\left(\frac{1}{2}+iT\right)} = O\left(\frac{\log T}{\log{\log{T}}}\right).
\end{align*}
Substituting these into Proposition \ref{prop6}, we obtain
\begin{align*}
N_k(T) = \frac{T}{2\pi}\log{\frac{T}{4\pi}} - \frac{T}{2\pi} + O_k\left(\frac{\log{T}}{(\log{\log{T}})^{1/2}}\right).
\end{align*}

Next, if $\zeta(\sigma+iT)=0$ or $\zeta^{(k)}(\sigma+iT)=0$ for some $\sigma\in\mathbb{R} \quad (T\geq3)$, then we again take a small $0<\epsilon<1$ such that $\zeta^{(k)}(\sigma+i(T\pm\epsilon))\neq0$ and $\zeta(\sigma+i(T\pm\epsilon))\neq0$ for any $\sigma\in\mathbb{R}$ as in the proof of Theorem \ref{cha1}.
Then similarly, we can show that the increment of the value of $N_k(T)$ can be included in the error term of the above equation.

Therefore
\begin{equation*}
N_k(T) = \frac{T}{2\pi}\log{\frac{T}{4\pi}} - \frac{T}{2\pi} + O_k\left(\frac{\log{T}}{(\log{\log{T}})^{1/2}}\right)
\end{equation*}
holds for any $T\geq3$.

\end{proof}


\begin{rmk}
It is well-known that in the case of the Riemann zeta function $\zeta(s)$, the number $N(T)$ of zeros of $\zeta(s)$ is estimated as
\begin{align*}
N(T) = \frac{T}{2\pi}\log{\frac{T}{2\pi}} - \frac{T}{2\pi} + S(T) + O\left(\frac{1}{T}\right),
\end{align*}
where $S(T) = \frac{1}{\pi}\arg{\zeta\left(\frac{1}{2}+iT\right)}$ with a standard branch (cf. \cite[Theorem 9.3]{tit2}).
Thus, the function $S(T)$ determines the error term in the estimate of $N(T)$.
Under RH, we have
\begin{align} \label{eq:st}
S(T) = O\left(\frac{\log{T}}{\log{\log{T}}}\right)
\end{align}
(cf. \cite[(14.13.1) of Theorem 14.13]{tit2}).
In comparison to the above estimate, the term that determines the error term of $N_k(T)$ is
\begin{align*}
\frac{1}{2\pi}\arg{G_k\left(\frac{1}{2}+iT\right)}+\frac{1}{2\pi}\arg{\zeta\left(\frac{1}{2}+iT\right)}
\end{align*}
by Proposition \ref{prop6} and under RH, they are currently estimated as follows:
\begin{align} \label{eq:skt}
\arg{G_k\left(\frac{1}{2}+iT\right)} =  O_k\left(\frac{\log{T}}{(\log{\log{T}})^{1/2}}\right),\quad \arg{\zeta\left(\frac{1}{2}+iT\right)} = O\left(\frac{\log{T}}{\log{\log{T}}}\right).
\end{align}
This estimate of $\arg{G_k\left(\frac{1}{2}+iT\right)}$ determines the error term of $N_k(T)$
and it results in $N_k(T)$ having error term slightly greater in magnitude than that of $N(T)$.
However, this is the best known estimate on $N_k(T)$ under RH at present.
\end{rmk}

Furthermore, the size of the implied $O$-constant in \eqref{eq:st} has been studied in many papers, such as \cite{car} and \cite{far}. In contrast to this, we currently have no information about the implied $O$-constant in the first equation of \eqref{eq:skt}.


\section*{Acknowledgements}

The author would like to express her gratitude to Prof. Kohji Matsumoto, Prof. Hirotaka Akatsuka, senior Ryo Tanaka, and also the referee for their valuable advice.



\begin{thebibliography}{1}
\bibitem{aka} H. Akatsuka, \emph{Conditional estimates for error terms related to the distribution of zeros of $\zeta'(s)$}, J. Number Theory {\bf 132} (2012), no. 10, 2242--2257.
\bibitem{ber} B. C. Berndt, \emph{The number of zeros for $\zeta^{(k)}(s)$}, J. Lond. Math. Soc. (2) {\bf 2} (1970), 577--580.
\bibitem{car} E. Carneiro, V. Chandee, M. B. Milinovich, \emph{Bounding $S(t)$ and $S_1(t)$ on the Riemann hypothesis}, Math. Ann. {\bf 356} (2013), no. 3, 939--968.
\bibitem{con} J. B. Conrey and A. Ghosh, \emph{Zeros of derivatives of the Riemann zeta-function near the critical line}, in "Analytic number theory, Proc. Conf. in Honor of P. T. Bateman  (Allerton Park, IL, USA, 1989)",  B. C. Berndt et al. (eds.), Progr. Math. Vol. 85, Birkh$\ddot{a}$user Boston, Boston, MA, 1990, pp. 95--110.
\bibitem{far} D. W. Farmer, S. M. Gonek, C. P. Hughes, \emph{The maximum size of $L$-functions}, J. Reine Angew. Math. {\bf 609} (2007), 215--236.
\bibitem{gon} S. M. Gonek, \emph{Mean values of the Riemann zeta-function and its derivatives}, Invent. Math. {\bf 75} (1984), no. 1, 123--141.
\bibitem{lev} N. Levinson and H. L. Montgomery, \emph{Zeros of the derivatives of the Riemann zeta-function}, Acta Math. {\bf 133} (1974), 49--65.
\bibitem{spe} A. Speiser, \emph{Geometrisches zur Riemannschen Zetafunktion}, Math. Ann. {\bf 110} (1935), no. 1, 514--521.
\bibitem{spi1} R. Spira, \emph{Another zero-free region for $\zeta^{(k)}(s)$}, Proc. Amer. Math. Soc. {\bf 26} (1970), 246--247.
\bibitem{spi2} R. Spira, \emph{Zero-free regions of $\zeta^{(k)}(s)$}, J. Lond. Math. Soc. {\bf 40} (1965), 677--682.
\bibitem{spi3} R. Spira, \emph{Zeros of $\zeta'(s)$ and the Riemann hypothesis}, Illinois J. Math. {\bf 17} (1973), 147--152.
\bibitem{ade} A. I. Suriajaya, \emph{On the Zeros of the Second Derivative of the Riemann Zeta Function under the Riemann Hypothesis}, arxiv:1309.7160 [math.NT].
\bibitem{tit1} E. C. Titchmarsh, \emph{The theory of functions}, second ed., Oxford Univ. Press, 1939.
\bibitem{tit2} E. C. Titchmarsh, \emph{The theory of the Riemann zeta-function}, second ed. (revised by D. R. Heath-Brown), Oxford Univ. Press, 1986.
\bibitem{yil1} C. Y. Yildirim, \emph{A Note on $\zeta''(s)$ and $\zeta'''(s)$}, Proc. Amer. Math. Soc. {\bf 124} (1996), no. 8, 2311--2314.
\bibitem{yil2} C. Y. Yildirim, \emph{Zeros of $\zeta''(s)$ \emph{\&} $\zeta'''(s)$ in $\sigma<\frac{1}{2}$}, Turk. J. Math. {\bf 24} (2000), no. 1, 89--108.
\end{thebibliography}
\end{document}